\numberwithin{figure}{section}
\newcounter{numberofremark}
\newcommand\nothing[1]{}
\newcommand{\dcl}{\DeclareMathOperator}
\dcl\bb{\mathfrak{b}}    
\dcl\CC{\mathcal{C}}    
\dcl\gl{\mathfrak{gl}}    
\dcl\g{\mathfrak{g}} 
\dcl\h{\mathfrak{h}} 
\dcl\n{\mathfrak{n}} 
\dcl\RR{\mathcal{R}}    
\dcl\Span{span} 
\dcl\U{\mathrm{U}}
\dcl\V{\mathfrak{V}}
\newlength\yStones
\newlength\xStones
\newlength\xxStones
\def\Stones{\pst@object{Stones}}
\def\Stones@i#1{%
  \pst@killglue%
  \begingroup%
  \use@par%
  \setlength\xxStones{\xStones}%
  \expandafter\Stones@ii#1,,\@nil
  \endgroup
  \global\addtolength\xStones{0.6cm}%
  \global\addtolength\yStones{-7.5mm}}%
\def\Stones@ii#1,#2,#3\@nil{%
1  \rput(\xxStones,\yStones){%
    \psframebox[framesep=0]{%
      \parbox[c][6mm][c]{11mm}{\makebox[11mm]{$#1$}}}}%
  \addtolength\xxStones{1.2cm}%
  \ifx\relax#2\relax\else\Stones@ii#2,#3\@nil\fi}
\def\Stone#1{\fbox{\makebox[7mm]{\strut#1}}\kern2pt}
\newtheorem{theorem}{Theorem}[section]
\newtheorem{lemma}[theorem]{Lemma}
\newtheorem{corollary}[theorem]{Corollary}
\newtheorem{proposition}[theorem]{Proposition}
\newtheorem{example}[theorem]{Example}
\newtheorem{remark}[theorem]{Remark}
\newtheorem{notation}[theorem]{Notation}
\newtheorem{definition}[theorem]{Definition}
\def\R{\mathbb{R}}
\def\Z{\mathbb{Z}}
\begin{document}
\title[Faces of polyhedra associated with relation modules]{Faces of polyhedra associated with relation modules}

\author[G. Benitez]{Germ\'an Benitez}
\address{Universidade Federal do Amazonas \\ Manaus AM \\ Brazil \&  Instituto de Matem\'atica e Estat\'istica \\ Universidade de S\~ao Paulo \\ Sao Paulo SP \\ Brazil}
\email{gabm@ufam.edu.br}

\author[L. E. Ramirez]{Luis Enrique Ramirez}
\address{Universidade Federal do ABC \\ Santo Andr\'e SP \\ Brazil} 
\email{luis.enrique@ufabc.edu.br}

%===========================================================%
						%ABSTRACT%
%===========================================================%

\begin{abstract}

Relation Gelfand--Tsetlin $\mathfrak{gl}_n$-modules were introduced in \cite{FRZ19}, and are determined by some special directed graphs and Gelfand-Tsetlin characters. In this work we constructed polyhedra associated with the class of relation modules, which includes as a particular case, any classical Gelfand--Tsetlin polytope.  Following the ideas presented in \cite{LM04} we give a characterization of $d$-faces of the associated polyhedra in terms of a matrix related to the corresponding graph. 

\end{abstract}

\subjclass[2010]{17B10, 52B05}
\keywords{Tableaux realization, relation modules, Gelfand-Tsetlin polytopes}

\maketitle

%===========================================================%
						%TABLE OF CONTENTS%
%===========================================================%

%\tableofcontents

%===========================================================%
						%INTRODUCTION%
%===========================================================%

\section{Introduction}

 	Gelfand-Tsetlin theorem \cite{GT50}, is one of the most remarkable results in representation theory and  gives an explicit realization of any simple finite dimensional $\mathfrak{gl}_n$-module. Their construction includes an explicit basis of tableaux with entries satisfying certain betweenness conditions. Using the betweenness conditions and highest weights, Berenstein and Zelevinsky \cite{BZ89} constructed the so--called \emph{Gelfand--Tsetlin polytopes}, and provide an effective way to relate representation theory of the general Lie algebra $\mathfrak{gl}_n$ and polyhedra. The main property of Gelfand--Tsetlin polytopes is a relation between the number of integral points and the weight multiplicities of the corresponding finite dimensional modules.  In 2004, De Loera and  McAllister in \cite{LM04} get to characterize the points of $d$-face (points in a face of dimension $d$) of the Gelfand--Tsetlin polytopes.
 
	In \cite{FRZ19}, Futorny, Ramirez and Zhang introduced \emph{sets of relations} as an attempt to generalize the betweenness conditions that characterize basis elements in Gelfand--Tsetlin Theorem. Associated with sets of relations the authors constructed  explicit modules called \emph{relation Gelfand--Tsetlin $\mathfrak{gl}_n$-modules,}  \emph{relation $\mathfrak{gl}_n$-modules}, or \emph{relation modules}. Using as a motivation the results in \cite{LM04}, and \cite{FRZ19}, the concept of Gelfand--Tsetlin polyhedra associated with relation modules was introduced in \cite{Car19}, and some well behave cases were studied. 

	Construction of polyhedra associated to a directed graph is well known, however, in this paper we focus on special graphs such that the associated polyhedra preserves the relationship between the number of ``integral points'' and the dimensions of the weight spaces of the corresponding relation module. Our approach includes a combinatorial characterization for the dimension of the minimal face associated to a point of the polyhedron.
  
  	In the literature,  Gelfand--Tsetlin patterns appear in connection with polyhedra in \cite{BZ89,KB95,LM04, Pos09,ABS11}, and probability in \cite{WW09}. In \cite{BZ89,KB95,LM04}, Gelfand--Tsetlin patterns are defined to be triangular arrangements of non-negative integers subject to some order restrictions, namely $\mathcal{C}_1$ (see Example \ref{relac}). There are many different ways to define Gelfand--Tsetlin polytopes and polyhedra. In \cite[p. 92]{KB95}, and \cite[Definition 1.2., p. 460]{LM04} are certain subsets of $\mathbb{R}_{\geq 0}^{\frac{n(n+1)}{2}}$. In \cite{ABS11,GKT13} the authors fixed the $n$th row and do not require the entries to be non--negative. In contrast, Danilov, Karzanov and  Koshevoy, in \cite{DKK05}, study certain polyhedra related to an extension of Gelfand--Tsetlin patterns, in the sense that it is not necessary a triangular configuration. In this paper we fix $\mathfrak{gl}_n$ and we will define $\mathcal{C}$-pattern to be a triangular arrangement of complex numbers subject to some order restrictions $\mathcal{C}$. Our definition of $\mathcal{C}$-pattern is also an extension of Gelfand--Tsetlin patterns associated with $\mathfrak{gl}_n$.	

	This paper is organized as follows. In Section~\ref{sec:gln-Mod} we discuss some preliminaries on highest weight modules and Gelfand--Tsetlin modules. Section~\ref{sec:rel_mod} is dedicated to relation modules, to this goal we recall the notions of set of relations $\mathcal{C}$, its associated graph, and Gelfand--Tsetlin tableaux. Associated with a set of relations $\mathcal{C}$, in Section~\ref{sec:GRelat}, we introduce $\mathcal{C}$-pattern and three kind of polyhedra associated with $\mathcal{C}$, we exhibit the relation of these polyhedra with relation modules. Also, we define a tiling and tiling matrix associated with a $\mathcal{C}$-pattern, and study some of their properties. In Section~\ref{sec:main} we use the results of the previous section to obtain a combinatorial characterization for the dimension of a minimal face with respect to a point in the polyhedron.

%===========================================================%
					%RELATION MODULES%
%===========================================================%

\section{Preliminaries on modules}
\label{sec:gln-Mod}

	Unless otherwise specified, the ground field will be $\mathbb{C}$, $\mathfrak{g}$ will denote a finite dimensional Lie algebra with a Cartan subalgebra $\mathfrak{h}$, and a triangular decomposition $\mathfrak{g}=\mathfrak{n}^-\oplus\mathfrak{h}\oplus\mathfrak{n}^+$. By $U(\mathfrak{g})$ we denote the universal enveloping algebra of  $\mathfrak{g}$ and by $V^{*}$ the dual space $\textup{Hom}_{\mathbb{C}}(V,\mathbb{C})$ of a vector space $V$.

%___________________________________________________________%

\subsection{Highest weight modules}

We begin this section recalling some definitions and basic results on weight modules. For more details we refer the reader to \cite{Hum08}.

%-----------------------------------------------------------%

\begin{definition}
 Let $M$ be an $U(\mathfrak{g})$-module. For each $\lambda\in\mathfrak{h}^{*}$, the $\lambda$-\emph{weight space} is defined by $
	M_{\lambda}:=\left\{v\in M\mid h\cdot v=\lambda(h)v,\mbox{ for all  }h\in\mathfrak{h}\right\}.
	$ If $M_{\lambda}\neq 0$, we say that $\lambda$ is a \emph{weight of} $M$, and $\dim (M_{\lambda})$ will be called  \emph{multiplicity} of $\lambda$. $M$ is called \emph{weight module} if it is equal to the direct sum of its weight spaces.
\end{definition}

%-----------------------------------------------------------%

A $\mathfrak{g}$-module $M$ is a \emph{highest weight module} if there exist  $\lambda\in\mathfrak{h}^{*}$, and a nonzero vector $v^+\in M_{\lambda}$ such that $M=U(\mathfrak{g})\cdot v^+$ and $U(\mathfrak{n}^{+})\cdot v^{+}=0$. The vector $v^+$ is called a \emph{highest weight vector} of $M$ and $\lambda$  a \emph{highest weight} of $M$. 

It is a well known result in representation theory that simple finite-dimensional $\mathfrak{gl}_n$-modules are highest weight modules, and there is an one to one correspondence between simple finite dimensional modules and integral dominant $\mathfrak{gl}_n$-weights (i.e. $n$-tuples $\bar{\lambda}=(\lambda_{1},\ldots,\lambda_{n})\in\mathbb{Z}^n/\langle (1,\ldots,1)\rangle$ satisfying $\lambda_{i}-\lambda_{i+1}\in \Z_{\geq 0}, \ \ \mbox{for all } i=1,2,\dots, n-1$). 

%___________________________________________________________%

\subsection{Gelfand--Tsetlin modules}

	From now on we fix  $\mathfrak{g}:=\mathfrak{gl}_n$ to be linear Lie algebra of $n\times n$ matrices over $\mathbb{C}$, the Cartan subalgebra $\mathfrak{h}$ of diagonal matrices, and the triangular decomposition  $\mathfrak{g}=\mathfrak{n}^-\oplus\mathfrak{h}\oplus\mathfrak{n}^+$, where $\mathfrak{n}^+$ is the subalgebra of upper triangular matrices and $\mathfrak{n}^-$  the subalgebra of lower triangular matrices.

	A large and important class of weight modules that have been extensively studied in the last 30 years is the so called \emph {Gelfand--Tsetlin modules.} This modules are characterized by a well behaved action of a fixed maximal commutative subalgebra of $U(\mathfrak{gl}_n)$, known as \emph{Gelfand-Tsetlin subalgebra}.  In what follows we give a formal definition of Gelfand-Tsetlin subalgebra and Gelfand-Tsetlin modules.

	For $m\leq n$, let $\mathfrak{gl}_{m}$ be the Lie subalgebra of $\mathfrak{gl}_{n}$ spanned by $\left\{ E_{ij}\mid i,j=1,\ldots,m \right\}$, where $E_{ij}$ is the $(i,j)$th  elementary matrix.  The \emph{(standard) \index{defs}{Gelfand--Tsetlin subalgebra}} ${\Gamma}$ of $U(\mathfrak{gl}_{n})$ (see \cite{DFO94}) is generated by $\left\{Z_m\mid m=1,\ldots, n \right\}$, where $Z_{m}$ stands for the center of $U(\mathfrak{gl}_{m})$.  
 
%-----------------------------------------------------------%

\begin{definition}
\label{Definition: GT-modules} 
	A finitely generated $U(\mathfrak{gl}_{n})$-module $M$ is called a \emph{Gelfand--Tsetlin module (with respect to
$\Gamma$)} if  $M|_{\Gamma}=\bigoplus_{\chi\in\Gamma^{*}}M_{\chi}$,  where 
	$$
	M_{\chi}=\left\{v\in M\mid \text{for each }\gamma\in\Gamma\text{, }\exists k\in\mathbb{Z}_{\geq 0} \text{ such that } (\gamma-\chi(\gamma))^{k}v=0 \right\}.
	$$
\end{definition}

%-----------------------------------------------------------%

\begin{remark} 
	Any weight module with finite dimensional weight spaces (in particular any module in category $\mathcal{O}$) is a Gelfand--Tsetlin module. Moreover, as $\mathfrak{h}\subseteq \Gamma$, any simple Gelfand-Tsetlin module is a weight module.

\end{remark}

%===========================================================%
					% RELATION MODULES %
%===========================================================%

\section{Relation modules}
\label{sec:rel_mod}
	The class of \emph{relation} Gelfand--Tsetlin modules was introduced in \cite{FRZ19}. These modules generalize the construction of simple finite-dimensional modules \cite{GT50} and generic Gelfand--Tsetlin modules \cite{DFO94}. We recall the construction and main properties.

	Set $\mathfrak{V}:=\left\{(i,j)\in\mathbb{Z}\times\mathbb{Z} \mid 1\leq j\leq i\leq n\right\}$, and  $\mathcal{R}:=\mathcal{R}^{-}\cup\mathcal{R}^{0}\cup\mathcal{R}^{+}\subseteq \mathfrak{V}\times\mathfrak{V}$, where  
	\begin{align*}
 	\mathcal{R}^+ &:=\{((i,j);(i-1,t))\ |\ 2\leq j\leq i\leq n,\ 1\leq t\leq i-1\},\\
 	\mathcal{R}^- &:=\{((i,j);(i+1,s))\ |\ 1\leq j\leq i\leq n-1,\ 1\leq s\leq i+1\},\\
 	\mathcal{R}^{0}&:=\{((n,i);(n,j))\ |\ 1\leq i\neq j\leq n\}.
 	\end{align*}
Any subset $\mathcal{C}\subseteq \mathcal{R}$ will be called a \emph{set of relations}. 

%-----------------------------------------------------------%

\begin{definition}
\label{def:graph}
	Let $\mathcal{C}$ be a set of relations.
	\begin{itemize}
	\item[(i)] By $G(\mathcal{C})$ we denote the directed graph with set of vertices $\mathfrak{V}$, and an arrow from vertex $(i,j)$ to $(r,s)$ if and only if $((i,j);(r,s))\in\mathcal{C}$. For convenience, we will picture the set of vertices $\mathfrak{V}$ as a triangular arrangement with $n$ rows, where the $k$th row is $\left\{(k,1),\dots,(k,k)\right\}$.

	\item[(ii)] By $\mathfrak{V}(\mathcal{C})$ we will denote the subset of $\mathfrak{V}$ of all vertices that are source or target of an arrow in $G(\mathcal{C})$.

 	\item[(iii)] Given $(i,j),(r,s)\in\mathfrak{V}$ we write $(i,j)\succeq_{\mathcal{C}} (r,s)$ if theres exist a directed path in $G(\mathcal{C})$ from $(i,j)$ to $(r,s)$.

	\item[(iv)] We call $\mathcal{C}$ \emph{reduced}, if for every $(k,j)\in\mathfrak{V}(\mathcal{C})$ the following conditions are satisfied:
		\begin{itemize}
		\item There exists at most one $i$ such that $((k,j);(k+1,i))\in\mathcal{C}$;
	
		\item There exists at most one $i$ such that $((k+1,i);(k,j))\in\mathcal{C}$;
	
		\item There exists at most one $i$ such that $((k,j);(k-1,i))\in\mathcal{C}$;
	
		\item There exists at most one $i$ such that $((k-1,i);(k,j))\in\mathcal{C}$;
	
		\item No relations in the top row follow from other relations.
	\end{itemize}

	\end{itemize}
\end{definition}

%-----------------------------------------------------------%

\begin{example}
\label{relac}
	For any $1\leq k\leq n$, we consider the following sets of relations: 
	\begin{center}
	\begin{tabular}{| c || l | }
	\hline 
	Set of relations  $\mathcal{C}$ & \ \ \ \ \ \ \ \ \ \ \ \ $\mathfrak{V}(\mathcal{C})$\\ 
	\hline\hline
 	$\mathcal{C}_{k}^{+}:=\left\{ ((i+1,j);(i,j)) \mid k\leq j\leq i\leq n-1\right\}$ & $\{(i,j)\in\mathfrak{V}\mid j\geq k\}\setminus\{(n,n)\}$\\ 
 	\hline
	$\mathcal{C}_{k}^{-}:=\left\{ ((i,j);(i+1,j+1)) \mid  k\leq j\leq i\leq n-1\right\}$& $\{(i,j)\in\mathfrak{V}\mid  j\geq k\}\setminus\{(n,k)\}$ \\ 
	\hline
	$\mathcal{C}_{k}:=\mathcal{C}_{k}^{+}\cup\mathcal{C}_{k}^{-}$ &$\begin{cases}\{(i,j)\in\mathfrak{V}\mid j\geq k\},& k<n\\
\emptyset,& k=n.
\end{cases}$ \\ 
	\hline
	\end{tabular}
	\end{center}

\noindent We will refer to $\mathcal{C}_1$ as the \emph{standard set of relations}, and to $\mathcal{C}_{n}=\emptyset$ as \emph{generic set of relations}. For $n=4$, the graphs associated with $\mathcal{C}_{1}^+$, $\mathcal{C}_1$, $\mathcal{C}_{2}^-$, and $\mathcal{C}_2$ are:
 
\noindent
\begin{minipage}[l]{0.5\textwidth}

	\begin{center}
	\tiny{\fbox{
	\xymatrixrowsep{0.3cm}\xymatrixcolsep{0.1cm}
	\xymatrix @C=0.01em {
	(4,1)\ar@[red][rd]& & (4,2)\ar@[red][rd]& &(4,3)\ar@[red][rd]& &(4,4)&\\
			          &(3,1)\ar@[red][rd]& &(3,2)\ar@[red][rd]& &(3,3)& &\\
					  & & (2,1)\ar@[red][rd]& &(2,2)& & &\\
					  & & &(1,1)& & & &}
	} }
	\end{center}

	\begin{center}	
	\tiny{\fbox{
	\xymatrixrowsep{0.3cm}\xymatrixcolsep{0.1cm}
	\xymatrix @C=0.01em {
	(4,1)& & (4,2) & &(4,3) & &(4,4)&\\
		 &(3,1)& &(3,2)\ar@[red][ur]& &(3,3)\ar@[red][ur]& &\\
		 & &(2,1)& &(2,2)\ar@[red][ur]& & &\\
		 & & &(1,1)& & & &}
	}}
	\end{center}

\end{minipage}
\begin{minipage}[r]{0.5\textwidth}
	\begin{center}
	\tiny{\fbox{
	\xymatrixrowsep{0.3cm}\xymatrixcolsep{0.1cm}
	\xymatrix @C=0.01em {
	(4,1)\ar@[red][rd]& & (4,2)\ar@[red][rd]& &(4,3)\ar@[red][rd]& &(4,4)&\\
			          &(3,1)\ar@[red][rd]\ar@[red][ur]& &(3,2)\ar@[red][rd]\ar@[red][ur]& &(3,3)\ar@[red][ur]& &\\
					  & &(2,1)\ar@[red][rd]\ar@[red][ur]& &(2,2)\ar@[red][ur]& & &\\
					  & & &(1,1)\ar@[red][ur]& & & &}
	}      }
	\end{center} 

	\begin{center}	
	\tiny{\fbox{
	\xymatrixrowsep{0.3cm}\xymatrixcolsep{0.1cm}
	\xymatrix @C=0.01em {
	(4,1)& & (4,2)\ar@[red][rd]& &(4,3)\ar@[red][rd]& &(4,4)&\\
		 &(3,1)& &(3,2)\ar@[red][rd]\ar@[red][ur]& &(3,3)\ar@[red][ur]& &\\
		 & &(2,1)& &(2,2)\ar@[red][ur]& & &\\
		 & & &(1,1)& & & &}
	}}
	\end{center}

\end{minipage}

\end{example}

%-----------------------------------------------------------%

\begin{remark}

Whenever we refer to connected components of a directed graph $G$, we are referring to the connected components of the graph obtained from $G$ forgetting the orientation of the arrows. 
\end{remark}

Entries of vectors $M\in \mathbb{C}^{\frac{n(n+1)}{2}}$ will be indexed by elements of $\mathfrak{V}$, ordered as  
	$
(m_{n1},\dots,m_{nn}| m_{n-1,1},\dots,m_{n-1,n-1}|\cdots| m_{21},m_{22}| m_{11}),
	$
and $T(M)$ will denote the triangular configuration of height $n$, with $k$th row $\left(m_{k1},\ldots,m_{kk}\right)$ for $k=1,\ldots, n$. We will refer to $T(M)$ as \emph{Gelfand--Tsetlin tableau}. For any $\mathbb{A}\subseteq \mathbb{C}$, we denote by $T_{n}(\mathbb{A})$ the set of all Gelfand--Tsetlin tableaux of height $n$ with entries in $\mathbb{A}$. By ${\mathbb Z}_0^\frac{n(n+1)}{2}$ we will denote the set of vectors $M$ in ${\mathbb Z}^\frac{n(n+1)}{2}$ such that $m_{ni}=0$ for $i=1,\ldots,n$. Given a Gelfand--Tsetlin tableau $T(L)$, elements of the set $T_{n}(L+{\mathbb Z}_{0}^\frac{n(n+1)}{2})$ will be called \emph{$L$-integral tableaux}.
	
%-----------------------------------------------------------%

\begin{definition}[{\cite[Definition 4.2]{FRZ19}}]
\label{def:C-realization}
Let $\mathcal{C}$ be a set of relations and $T(L)$ any Gelfand--Tsetlin tableau.
	\begin{itemize}
	\item[(i)] We say that \emph{$T(L)$ satisfies $\mathcal{C}$}, if $l_{ij}-l_{rs}\in \mathbb{Z}_{\geq 0}$  for any $((i,j); (r,s))\in \mathcal{C}$. 
	
	\item[(ii)] $T(L)$ is a \emph{$\mathcal{C}$-realization}, if $T(L)$ satisfies $\mathcal{C}$ and for any $1\leq k\leq n-1$ we have, $l_{ki}-l_{kj}\in \mathbb{Z} $ if and only if $(k,i)$ and $(k,j)$ belong to  the same connected component of $G(\mathcal{C})$.
 
 	\item[(iii)]  We call $\mathcal{C}$ \emph{noncritical} if for any $\mathcal{C}$-realization $T(M)$ one has $m_{ki}-m_{kj}+j-i\neq 0$, $1\leq k\leq n-1,\ i\neq j$, whenever  $(k,i)$ and $(k,j)$ are in the same connected component of $G(\mathcal{C})$.
 	
	\item[(iv)] Suppose that $T(L)$ satisfies $\mathcal{C}$. By ${\mathcal B}_{\mathcal{C}}(T(L))$ we denote the set of  $L$-integral tableaux satisfying $\mathcal{C}$, and  by $V_{\mathcal{C}}(T(L))$  the free vector space over $\mathbb{C}$ with basis ${\mathcal B}_{\mathcal{C}}(T(L))$. 
	\end{itemize}
\end{definition}
%-----------------------------------------------------------%

\begin{remark}
\label{rem:XvsT(X)} 
Whenever we use a Gelfand-Tsetlin tableau $T(X)$, we are considering it as an element of a free vector space of the form $V_{\mathcal{C}}(T(L))$ for some set of relations $\mathcal{C}$. In particular, we should be careful when comparing $X$ with $T(X)$. For instance, $T(X+Y)\neq T(X)+T(Y)$.  
\end{remark}

%-----------------------------------------------------------%

\begin{example}
\label{example realizations} 
Let $\mathcal{C}$ be the set of relations with associated graph $G(\mathcal{C})$, and Gelfand--Tsetlin tableaux $T(L)$ and $T(M)$ defined as follows: 
	\begin{center}
	\tiny{
	$G(\mathcal{C})$=	
	\begin{tabular}{c c c c c c c }
	\xymatrixrowsep{0.3cm}\xymatrixcolsep{0.1cm}
	\xymatrix @C=0.1em {
	&(4,1)& & (4,2)& &(4,3)& &(4,4)&\\
		& &(3,1)& &(3,2)\ar@[red][rd]& &(3,3)& &\\
   		  & & &(2,1)\ar@[red][rd]\ar@[red][ur]& &(2,2)& & &\\
				& & & &(1,1)\ar@[red][ur]& & & &}
	\end{tabular}} 
	\end{center}	

	\begin{center}
 	\hspace{1cm}\Stone{$1$}\Stone{$2$}\Stone{$3$}\Stone{$4$}\hspace{1cm}\Stone{$1$}\Stone{$2$}\Stone{$3$}\Stone{$4$}\\[0.2pt]
 	\hspace{1cm}\Stone{$1$}\Stone{$0$}\Stone{$2$}\hspace{2cm}\Stone{$\sqrt{2}$}\Stone{$1$}\Stone{$\sqrt{3}$}\\[0.2pt]
  	\hspace{-0.2cm}$T(L)$= \Stone{$1$}\Stone{$-2$} \hspace{1.7cm} $T(M)$=  \Stone{$2$}\Stone{$0$}\\[0.2pt]
 	\hspace{1cm}\Stone{$0$}\hspace{4.2cm}\Stone{$1$}\\\vspace{0.3cm}
	\end{center}
In this case $T(L)$ and $T(M)$ satisfy $\mathcal{C}$,  $T(M)$ is a $\mathcal{C}$-realization, and $T(L)$ is not a $\mathcal{C}$-realization (for instance, $l_{31}-l_{32}\in\mathbb{Z}$). Moreover,  as $(2,1)\succeq_{\mathcal{C}}  (2,2)$, for any $\mathcal{C}$-realization $T(S)$  we should have $s_{21}-s_{22}\in\mathbb{Z}_{\geq 0}$, in particular $\mathcal{C}$ is a noncritical set of relations. 
\end{example}

%-----------------------------------------------------------%

\begin{remark}
Definitions \ref{def:C-realization}\textup{(i),(iii)} are slightly different from the original definitions in \cite{FRZ19}, however they can be related using the shift $l_{kj}\longleftrightarrow l_{kj}-j+1$.
\end{remark}

%-----------------------------------------------------------%

\begin{definition}[{\cite[Definition 4.4]{FRZ19}}]
\label{def:adm}
Let $\mathcal{C}$ be a set of relations. We call  $\mathcal{C}$  \emph{admissible} if for any $\mathcal{C}$-realization $T(L)$,  $V_{\mathcal{C}}(T(L))$ has structure of a $\mathfrak{gl}_{n}$-module, endowed with the following action of the generators of $\mathfrak{gl}_{n}$ on any $T(M)\in \mathcal{B}_{\mathcal{C}}(T(L))$,
	\begin{align}\label{E_{k,k+1}}
	E_{k,k+1}(T(M))&=-\sum_{i=1}^{k}\left(\frac{\displaystyle\prod_{j=1}^{k+1}( m_{ki}- m_{k+1,j}+j-i)}{\displaystyle\prod_{j\neq i}^{k}( m_{ki}- m_{kj}+j-i)}\right)T(M+\delta^{ki}),\\\label{E_{k+1,k}}
	E_{k+1,k}(T(M))&=\sum_{i=1}^{k}\left(\frac{\displaystyle\prod_{j=1}^{k-1}( m_{ki}- m_{k-1,j}+j-i)}{\displaystyle\prod_{j\neq i}^{k}( m_{ki}- m_{kj}+j-i)}\right)T(M-\delta^{ki}),\\\label{E_{k,k}}
	E_{kk}(T(M))	   &=\left(\sum_{i=1}^{k} m_{ki}-\sum_{i=1}^{k-1} m_{k-1,i}\right)T(M),
	\end{align}
where $\delta^{ki}$ stands for the vector in $T_n(\mathbb{Z})$ such that $(\delta^{ki})_{rs}=\delta_{kr}\delta_{is}$.  If $\mathcal{C}$ is admissible, and $T(L)$ a $\mathcal{C}$-realization, we will call $V_{\mathcal{C}}(T(L))$ a  \emph{relation module}.
\end{definition}

%-----------------------------------------------------------%

\begin{remark}
\label{rem:eigenvalues}
By Equation \eqref{E_{k,k}}, whenever $\mathcal{C}$ is an admissible set of relations, $\mathcal{B}_{\mathcal{C}}(T(L))$ is an eigenbasis for the action of the Cartan subalgebra $\{E_{11},\ldots,E_{nn}\}$.  Moreover, any relation module is a Gelfand-Tsetlin module, with diagonalizable action of the Gelfand-Tsetlin subalgebra $\Gamma$ (see \cite[Theorem 5.3]{FRZ19}).
\end{remark}

%-----------------------------------------------------------%

	Set $k\neq n$, we say that $((k,i); (k,j))\in\mathfrak{V}\times \mathfrak{V}$  is an \emph{adjoining pair} for a graph $G(\mathcal{C})$, if  $i<j$,  $(k,i)\succeq_{\mathcal{C}}  (k,j)$, and $(k,i)\succeq_{\mathcal{C}} (k,t)\succeq_{\mathcal{C}} (k,j)$ implies $t=i$ or $t=j$.

%-----------------------------------------------------------%

\begin{theorem}[{\cite[Theorem 4.33]{FRZ19}}]
\label{thm:adm}
Suppose that $\mathcal{C}$ is a noncritical set of relations whose associated graph $G = G(\mathcal{C})$ satisfies the following conditions: 
	\begin{itemize}
	\item[(i)]  $G$ is reduced;
	\item[(ii)] $G$ does not contain loops, and $(k,i)\succeq_{\mathcal{C}}  (k,j)$ implies $i\leq j$;
	\item[(iii)] If  $G$ contains an arrow connecting $(k,i)$ and $(k+1,t)$, then  $(k+1,s)$ and $(k,j)$  with $i<j$, $s<t$ are not connected in $G$. 
	\end{itemize}
$\mathcal{C}$ is an admissible set of relations if and only if for any connected component $G(\mathcal{D})$ of $G(\mathcal{C})$ and any adjoining pair $((k,i);(k,j))$ in $G(\mathcal{D})$, there exist $p, q$ such that $\mathcal{E}_{1}\subseteq\mathcal{D}$ or, there exist $s<t$ such that $\mathcal{E}_{2}\subseteq\mathcal{D}$, where the graphs associated to $\mathcal{E}_{1}$ and $\mathcal{E}_{2}$ are as follows
	\begin{equation*}
	\begin{tabular}{c c c c}
	\xymatrixrowsep{0.3cm}
	\xymatrixcolsep{0.1cm}
	\xymatrix @C=0.2em{
	  &   &\scriptstyle{(k+1,p)}\ar@*{}[rd]   &   & \\
	 \scriptstyle{G(\mathcal{E}_{1})=}  &\scriptstyle{(k,i)}\ar@*{}[rd] \ar@*{}[ru]  &    &\scriptstyle{(k,j)};   &  \\
	  &   &\scriptstyle{(k-1,q)}\ar@*{}[ru]   &   & }
	&\ \ &
	\xymatrixrowsep{0.3cm}
	\xymatrixcolsep{0.1cm}
	\xymatrix @C=0.2em {
   	&   &\scriptstyle{(k+1,s)}    &   &\scriptstyle{(k+1,t)}\ar@*{}[rd]&& \\
  	\scriptstyle{G(\mathcal{E}_{2})=} &\scriptstyle{(k,i)} \ar@*{}[ru]  & &   & & \scriptstyle{(k,j)} \\
   	&   &   &   & &&}
	\end{tabular}
	\end{equation*}
\end{theorem}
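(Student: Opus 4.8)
\noindent\emph{Sketch of the argument.} The plan is to follow \cite{FRZ19} (in the spirit of \cite{LM04}): first reduce admissibility to a \emph{closure} property of the operators \eqref{E_{k,k+1}}--\eqref{E_{k,k}}, and then read that closure off the graph $G(\mathcal{C})$.

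I would begin by showing that $\mathcal{C}$ is admissible if and only if, for every $\mathcal{C}$-realization $T(L)$, one has closure: whenever $T(M)\in\mathcal{B}_{\mathcal{C}}(T(L))$ and $T(M\pm\delta^{ki})$ fails to satisfy $\mathcal{C}$, the corresponding coefficient in \eqref{E_{k,k+1}} or \eqref{E_{k+1,k}} vanishes. One direction is immediate from Definition~\ref{def:C-realization}: if $V_{\mathcal{C}}(T(L))$ carries the module structure of Definition~\ref{def:adm}, the generators must send $\mathcal{B}_{\mathcal{C}}(T(L))$ into $V_{\mathcal{C}}(T(L))$, which is exactly closure. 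For the converse, note that on a $\mathcal{C}$-realization no denominator in \eqref{E_{k,k+1}}--\eqref{E_{k+1,k}} vanishes: the factors $m_{ki}-m_{kj}+j-i$ with $(k,i),(k,j)$ in the same component are nonzero by noncriticality, those with $(k,i),(k,j)$ in distinct components are non-integral by Definition~\ref{def:C-realization}(ii), and the row-$n$ factors of \eqref{E_{k+1,k}} are controlled by the ``reduced top row'' clause. Since every $L$-integral tableau satisfying $\mathcal{C}$ is again a $\mathcal{C}$-realization, closure guarantees that iterating \eqref{E_{k,k+1}}--\eqref{E_{k,k}} on $\mathcal{B}_{\mathcal{C}}(T(L))$ never leaves $\mathcal{B}_{\mathcal{C}}(T(L))$ and so never meets a pole; the $\mathfrak{gl}_n$-relations then follow from the standard rational-function identities for Gelfand--Tsetlin-type formulas. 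Thus the theorem reduces to characterizing closure.

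Next I would unwind closure using the explicit shape of $\mathcal{R}$. Raising $m_{ki}$ by one can only violate a relation of $\mathcal{C}$ of the form $((k+1,p);(k,i))$ or $((k-1,q);(k,i))$, and lowering it only one of the form $((k,i);(k-1,q))$ or $((k,i);(k+1,s))$; by reducedness there is at most one such relation in each of the four directions. So closure at $(k,i)$ reduces to: each time one of these relations is tight on some $T(M)\in\mathcal{B}_{\mathcal{C}}(T(L))$, the single numerator product in \eqref{E_{k,k+1}} or \eqref{E_{k+1,k}} vanishes there. When the offending relation has matching column --- e.g.\ $((k+1,i);(k,i))$, the case occurring for the standard relations $\mathcal{C}_1$ --- the numerator of \eqref{E_{k,k+1}} contains the factor $m_{ki}-m_{k+1,i}+i-i=m_{ki}-m_{k+1,i}$, which vanishes precisely when that relation is tight, so closure is automatic: this is the classical Gelfand--Tsetlin mechanism. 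The content of the theorem sits in the remaining cases --- an off-column tight relation, or one involving the row that the relevant numerator product does not even mention (row $k-1$ for \eqref{E_{k,k+1}}, row $k+1$ for \eqref{E_{k+1,k}}). In such a case closure forces some \emph{other} numerator factor $m_{ki}-m_{k+1,t}+t-i=0$ to hold on the whole locus of $\mathcal{B}_{\mathcal{C}}(T(L))$ where the offending relation is tight; equivalently, it forces this equality to be a logical consequence of the relations of the connected component together with the noncriticality inequalities. Tracking which vertex can carry such a tight relation produces an adjoining pair $((k,i);(k,j))$, the companion index $t$ coming from the path realizing $(k,i)\succeq_{\mathcal{C}}(k,j)$.

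The heart of the proof is then the equivalence: this forced equality is available for \emph{every} $\mathcal{C}$-realization if and only if the connected component $\mathcal{D}$ contains a copy of $\mathcal{E}_1$ (a diamond joining $(k,i)$ and $(k,j)$ through rows $k\pm1$) or of $\mathcal{E}_2$ (a crossing pair of arrows to row $k+1$) built on that adjoining pair. For ``$\mathcal{E}_{\ell}\subseteq\mathcal{D}\Rightarrow$ closure'' I would combine the inequalities carried by the arrows of $\mathcal{E}_{\ell}$, which sandwich the relevant entry, with the relevant noncriticality inequalities, pinning that entry to the required value; hypothesis (iii) is what keeps stray arrows from disturbing the bookkeeping, and reducedness keeps the products single-factored. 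For ``no $\mathcal{E}_{\ell}\Rightarrow$ not admissible'' I would, given such an adjoining pair, construct an explicit $\mathcal{C}$-realization $T(L)$ by assigning values along a linear extension of $\succeq_{\mathcal{C}}$: make the offending relation on $(k,i)$ tight and choose every remaining entry as generically as the graph permits (integral within each component, non-integral across components, compatible with the top-row relations $\mathcal{R}^{0}$), and then exhibit $T(M)\in\mathcal{B}_{\mathcal{C}}(T(L))$ for which \eqref{E_{k,k+1}} or \eqref{E_{k+1,k}} has a nonzero coefficient on a tableau violating $\mathcal{C}$, contradicting admissibility. The main obstacle --- and the technical core --- is precisely this general-position construction and its mirror in the first direction: one must prove that the relations packaged in a connected component impose no equality among the entries of rows $k\pm1$ beyond those visible through an $\mathcal{E}_1$- or $\mathcal{E}_2$-pattern, and conversely that such patterns really do force them. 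This is where the hypotheses (i)--(iii) earn their keep and where essentially all the difficulty lies.
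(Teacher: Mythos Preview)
The paper does not prove this theorem at all: it is quoted verbatim from \cite[Theorem~4.33]{FRZ19} and immediately followed by a remark applying it, with no argument given. So there is no ``paper's own proof'' to compare your sketch against; any comparison would have to be with the original argument in \cite{FRZ19}, not with the present paper.

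That said, as a sketch of how such a result is proved your outline is reasonable in spirit---reduce admissibility to a closure/vanishing-coefficient condition and then read that condition off the local graph structure around adjoining pairs---but it remains a sketch. The genuinely hard steps you flag at the end (the general-position construction of a $\mathcal{C}$-realization witnessing failure, and the verification that the $\mathcal{E}_1$/$\mathcal{E}_2$ patterns exactly force the needed numerator vanishing under hypotheses (i)--(iii)) are precisely where the work lies, and you have not carried them out. If your goal is to supply a self-contained proof here, those two pieces need to be written in full; if your goal is simply to cite the result, then no proof is expected and you should drop the sketch.
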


%-----------------------------------------------------------%

\begin{remark}
Follows from Theorem~\ref{thm:adm} that the sets of relations $\mathcal{C}_{k}$, $\mathcal{C}^{+}_{k}$ and $\mathcal{C}^{-}_{k}$ from Example \ref{relac}, are admissible set of relations for any $1\leq k\leq n$. 
\end{remark}

%-----------------------------------------------------------%

	Gelfand-Tsetlin theorem \cite{GT50}, is one of the most remarkable results in representation theory and  gives an explicit realization of any simple finite dimensional module. The following theorem uses sets of relations to rewrite  Gelfand--Tsetlin theorem.

%-----------------------------------------------------------%

\begin{theorem}
\label{GT Theorem}
If $\lambda:=(\lambda_1,\ldots,\lambda_{n})$ is an integral dominant $\mathfrak{gl}_{n}$-weight and $T(\Lambda)$ is the Gelfand--Tsetlin tableau of height $n$ with entries $\lambda_{ki}:=\lambda_{i}$, then 
$V_{\mathcal{C}_{1}}(T(\Lambda))$ is isomorphic to the simple finite dimensional module $L(\lambda)$. Moreover,  
	\begin{itemize}
	\item[(i)] $\mathcal{B}_{\mathcal{C}_{1}}(T(\Lambda))$ is a basis of $V_{\mathcal{C}_{1}}(T(\Lambda))$.
	\item[(ii)] For any $\mu=(\mu_1,\ldots,\mu_n)\in\mathfrak{h}^{*}$, the weight space $L(\lambda)_{\mu}$ has a basis $$\left\{T(X)\in\mathcal{B}_{\mathcal{C}_{1}}(T(\Lambda))\ \Big | \ \mu_{k}=\sum_{i=1}^{k} x_{ki}-\sum_{i=1}^{k-1} x_{k-1,i}\text{ for all } k=1,\ldots,n\right\}.$$
	\end{itemize}

\end{theorem}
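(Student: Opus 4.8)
The plan is to identify $V_{\mathcal{C}_{1}}(T(\Lambda))$ with the classical Gelfand--Tsetlin module and then transport the classical statements through the shift $l_{kj}\longleftrightarrow l_{kj}-j+1$ mentioned in the remark after Example~\ref{example realizations}. First I would unpack what $T(L)$ \emph{satisfies $\mathcal{C}_{1}$} means concretely: since $\mathcal{C}_1=\mathcal{C}_1^{+}\cup\mathcal{C}_1^{-}$ consists of the relations $((i+1,j);(i,j))$ and $((i,j);(i+1,j+1))$ for all admissible indices, the condition $l_{ij}-l_{rs}\in\mathbb{Z}_{\geq 0}$ for each such pair becomes exactly the betweenness inequalities $l_{i+1,j}\geq l_{i,j}\geq l_{i+1,j+1}$, i.e. the defining conditions of a Gelfand--Tsetlin pattern (after the index shift, these are the strict/weak betweenness conditions of \cite{GT50}). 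Then I would check that $T(\Lambda)$, with all rows equal to $\lambda$, is a $\mathcal{C}_1$-realization: it satisfies $\mathcal{C}_1$ because $\lambda$ is dominant, and the connected-components condition in Definition~\ref{def:C-realization}(ii) holds because for $\mathcal{C}_1$ every pair $(k,i),(k,j)$ in a fixed row $k<n$ lies in the same connected component of $G(\mathcal{C}_1)$ (the graph for $\mathcal{C}_1$ is connected on $\{(i,j)\mid j\ge 1\}$), while $l_{ki}-l_{kj}=\lambda_i-\lambda_j\in\mathbb{Z}$ always. Noncriticality of $\mathcal{C}_1$ and admissibility follow from the remark after Theorem~\ref{thm:adm}.

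Next I would compare the module structure. By Definition~\ref{def:adm}, $V_{\mathcal{C}_1}(T(\Lambda))$ has basis $\mathcal{B}_{\mathcal{C}_1}(T(\Lambda))$, which by the previous paragraph is precisely the set of $\Lambda$-integral Gelfand--Tsetlin patterns with top row $\lambda$ satisfying the betweenness conditions — this is exactly the Gelfand--Tsetlin basis of $L(\lambda)$. The action formulas \eqref{E_{k,k+1}}, \eqref{E_{k+1,k}}, \eqref{E_{k,k}} are, up to the index shift $m_{kj}\mapsto m_{kj}-j+1$ (which turns $m_{ki}-m_{kj}+j-i$ into the classical difference $m_{ki}-m_{kj}$ and similarly for the mixed-row factors), the classical Gelfand--Tsetlin formulas for the action of the Chevalley generators $E_{k,k+1}, E_{k+1,k}$ and the diagonal $E_{kk}$. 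Hence the linear map sending each pattern in $\mathcal{B}_{\mathcal{C}_1}(T(\Lambda))$ to the corresponding classical Gelfand--Tsetlin basis vector of $L(\lambda)$ is a $\mathfrak{gl}_n$-module isomorphism, because it intertwines the generators and both sides are spanned by the images of basis elements. Since $L(\lambda)$ is simple finite dimensional with highest weight $\lambda$, this gives $V_{\mathcal{C}_1}(T(\Lambda))\cong L(\lambda)$, and (i) is the statement that $\mathcal{B}_{\mathcal{C}_1}(T(\Lambda))$ maps to a basis, which we have just established.

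For (ii), I would use Remark~\ref{rem:eigenvalues}: formula \eqref{E_{k,k}} shows $E_{kk}$ acts diagonally on $T(X)$ with eigenvalue $\sum_{i=1}^{k}x_{ki}-\sum_{i=1}^{k-1}x_{k-1,i}$, so $\mathcal{B}_{\mathcal{C}_1}(T(\Lambda))$ is a weight eigenbasis and the $\mu$-weight space is spanned by exactly those $T(X)$ whose eigenvalue tuple equals $\mu=(\mu_1,\dots,\mu_n)$, i.e. $\mu_k=\sum_{i=1}^k x_{ki}-\sum_{i=1}^{k-1}x_{k-1,i}$ for all $k$; transporting through the isomorphism of the previous paragraph identifies this with a basis of $L(\lambda)_\mu$. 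The main obstacle — really the only nontrivial bookkeeping — is making the correspondence with the classical Gelfand--Tsetlin formulas precise under the shift $l_{kj}\leftrightarrow l_{kj}-j+1$ and checking that this shift is compatible with the $\Lambda$-integrality condition (so that $L$-integral tableaux for $\mathcal{C}_1$ correspond bijectively to the classical integer patterns with prescribed top row) and with the betweenness inequalities; everything else is a direct comparison of formulas and an appeal to Theorem~\ref{thm:adm} and the simplicity of $L(\lambda)$.
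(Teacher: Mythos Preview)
The paper does not supply a proof of this theorem at all: it is presented as a restatement of the classical Gelfand--Tsetlin theorem \cite{GT50} in the language of relation modules (see the sentence immediately preceding the statement), and no argument is given. Your proposal is precisely the verification that this restatement is correct --- unwinding what ``satisfies $\mathcal{C}_1$'' means to recover the betweenness conditions, checking that $T(\Lambda)$ is a $\mathcal{C}_1$-realization, matching the action formulas \eqref{E_{k,k+1}}--\eqref{E_{k,k}} to the classical ones via the shift $l_{kj}\leftrightarrow l_{kj}-j+1$, and then transporting the classical result. Item~(i) is in fact a tautology once the isomorphism is in place, since $V_{\mathcal{C}}(T(L))$ is \emph{defined} as the free vector space on $\mathcal{B}_{\mathcal{C}}(T(L))$; item~(ii) follows directly from \eqref{E_{k,k}} as you say. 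So your approach is correct and is exactly the content implicit in the paper's citation of \cite{GT50}; you are simply spelling out what the paper leaves unsaid.
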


%-----------------------------------------------------------%

\begin{example}
\label{Some relation modules}
Let $\mathcal{C}$ be one of the sets of relations defined in Example \ref{relac}, $T(L)$ a $\mathcal{C}$-realization and $\lambda:=(l_{n1},\ldots,l_{nn})$. As we mention before, they are admissible sets of relations and:
	\begin{itemize}
	\item[(i)] Set $\mathcal{C}=\mathcal{C}_{1}^+$, in this case $V_{\mathcal{C}}(T(L))$ is isomorphic to the \emph{generic} Verma module $M(\lambda)$ (see \cite[Example 5.10]{FRZ19}). 
	
	\item[(ii)] Set $\mathcal{C}=\mathcal{C}_{2}$, and $\tilde{\lambda}:=(l_{n2},\ldots,l_{nn})$. In this case  $V_{\mathcal{C}}(T(L))$ is an infinite-dimensional module with finite-dimensional weight spaces (see \cite[Section 3]{Maz03}), all of them of dimension $\dim (L(\tilde{\lambda}))$. In fact, by Theorem \ref{GT Theorem}, $L(\tilde{\lambda})$ has a basis $\mathcal{B}$ parameterized by the set of standard tableaux in $T_{n-1}(\mathbb{Z})$ with top row $\tilde{\lambda}$. Given $\mu$ a weight of $V_{\mathcal{C}}(T(L))$, we consider the map $\psi_{\mu}:\mathcal{B}\to T_{n}(\mathbb{R})$ given by $\psi_{\mu}(T(\tilde{S}))=T(S)$ where:
	\begin{displaymath}
	{s}_{ij}=\begin{cases}
	\tilde{s}_{i-1,j-1}, & \text{if $i,j\geq 2$;}\\ 	
	\sum\limits_{t=1}^{i}\mu_{t}-\sum\limits_{r=2}^{i} \tilde{s}_{i-1,r-1}, &  \text{if $j=1$ and $2\leq i\leq n$;} \\
	\mu_{1}, & \text{if $(i,j)=(1,1)$.}
	\end{cases}
	\end{displaymath}
It is easy to check that $\psi_{\mu}$ is injective and $\psi(\mathcal{B})$ is a basis of $V_{\mathcal{C}}(T(L))_{\mu}$.
	
	\item[(iii)]  Set $\mathcal{C}=\mathcal{C}_{k}$ with $k\geq 3$. In this case, the module $V_{\mathcal{C}}(T(L))$ is infinite-dimensional with infinite-dimensional weight spaces. 
	\end{itemize}
\end{example}

%===========================================================%
		%POLYHEDRA ASSOCIATED WITH SETS OF RELATIONS%
%===========================================================%

\section{Polyhedra and its faces}
\label{sec:GRelat}

 We begin the section with some generalities about polyhedra and faces of polyhedra. Then we introduce the main objects to be studied in this paper, namely, polyhedra associated with sets of relations. We finish the section with some technical lemmas that will help us to characterize the dimension of a face associated with a given point.
 
%-----------------------------------------------------------%

Let $V$ be a finite-dimensional $\mathbb{R}$-vector space.  Given $v\in V$ and $W$ a vector subspace of $V$, the set  $v+W$ is called  \emph{affine subspace} of $V$. The dimension of $v+W$ is $\dim (W)$. For a subset $X$ of $V$ we will denote by $\textup{aff}(X)$ the  smallest affine subspace of $V$ containing $X$. 

%-----------------------------------------------------------%

A subset $P$ of an $\mathbb{R}$-vector space $V$ is called a \emph{polyhedron} if it is the intersection of finitely
many closed halfspaces. The \emph{dimension} of $P$ is given by $\dim \left(\textup{aff}(P)\right)$. A \emph{polytope} is a bounded polyhedron.

%-----------------------------------------------------------%

A hyperplane $H$ is called a \emph{support hyperplane} of the  polyhedron $P$, if $H\cap P\neq \emptyset$, and $P$ is contained in one of the two closed halfspaces bounded by $H$. The intersection $F=H\cap P$ is a \emph{face} of $P$, and $H$ is called a \emph{support hyperplane associated with} $F$. 
%A \emph{facet} of $P$ is a face of dimension $\dim P-1$. 
Faces of dimension $0$ are called \emph{vertices}, and an \emph{edge} is a face of dimension $1$.  In general, a face $F$ of dimension $k$ is called a \emph{$k$-face}.

%-----------------------------------------------------------%

\begin{proposition}[{\cite[Theorem 1.10(d)]{BG09}}]
\label{rem:int}
Let $P$ be a polyhedron in $\mathbb{R}^{d}$, and $x\in P$. There exists a unique face $F$ such that $x\in\textup{int}(F)$, where $\textup{int}(F)$ denotes the relative interior of the affine subspace $\textup{aff}(F)$ with respect to the standard topology of $\mathbb{R}^d$. 
\end{proposition}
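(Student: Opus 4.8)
The plan is to reduce everything to the active‑constraint set of a point. Fix an irredundant presentation $P=\bigcap_{i\in I}H_i^-$ with $I$ finite and $H_i^-=\{y\in\mathbb{R}^d\mid \langle a_i,y\rangle\le b_i\}$, and for $z\in P$ put $I_z:=\{i\in I\mid \langle a_i,z\rangle=b_i\}$. Given $x\in P$, I would first produce the candidate face
$$F:=\{y\in P\mid \langle a_i,y\rangle=b_i\ \text{for all }i\in I_x\},$$
and verify that it is a face by exhibiting the support hyperplane $H=\{\,y\mid \langle a,y\rangle=b\,\}$, where $a:=\sum_{i\in I_x}a_i$ and $b:=\sum_{i\in I_x}b_i$: summing the defining inequalities over $i\in I_x$ shows that $P$ lies in the corresponding halfspace, with equality precisely on $F$, and $x\in H\cap P=F$, so $F\neq\emptyset$. (When $I_x=\emptyset$ one has $F=P$; here I rely on the convention of \cite{BG09} that $P$ itself counts as a face — equivalently, one passes to $\textup{aff}(P)$ throughout.) Next I would check $x\in\textup{int}(F)$: since $F$ is contained in the affine set $\{y\mid \langle a_i,y\rangle=b_i,\ i\in I_x\}$, so is $\textup{aff}(F)$; hence every $y\in\textup{aff}(F)$ automatically satisfies the ``tight'' equalities, and for $y$ close enough to $x$ the finitely many strict inequalities $\langle a_i,y\rangle<b_i$ $(i\notin I_x)$ persist by continuity, so $y\in P$ and therefore $y\in F$. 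Thus a relative neighbourhood of $x$ in $\textup{aff}(F)$ lies in $F$.

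For uniqueness, the key technical observation I would isolate is this: if $G$ is a face, $z\in\textup{int}(G)$ and $w\in G$, then — extending the segment $[w,z]$ slightly beyond $z$ inside $\textup{aff}(G)$, which is legitimate because $z$ is a relative interior point of $G$ and $\textup{aff}(G)$ contains the whole line through $w$ and $z$ — there is a point $z''\in G\subseteq P$ with $z$ a \emph{strict} convex combination of $w$ and $z''$; consequently any linear functional $\ell$ satisfying $\ell\le\beta$ on all of $P$ and $\ell(z)=\beta$ must have $\ell(w)=\beta$. Using this I would prove that for every face $G$ and every $z\in\textup{int}(G)$ one has $G=\{y\in P\mid \langle a_i,y\rangle=b_i\ \forall i\in I_z\}=:G'$. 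The inclusion $G\subseteq G'$ follows by applying the observation inside $G$ with $\ell=\langle a_i,\cdot\rangle$, $\beta=b_i$ for each $i\in I_z$ (these are tight at $z$). For $G'\subseteq G$, note that by the existence step $z$ is a relative interior point of $G'$; applying the observation inside $G'$ with $\ell=\langle c,\cdot\rangle$ the functional of a support hyperplane $H=\{\langle c,\cdot\rangle=d\}$ defining $G$, and using that $\langle c,z\rangle=d$ because $z\in G\subseteq H$, one gets $\langle c,w\rangle=d$ for every $w\in G'$, i.e.\ $w\in H\cap P=G$. Granting this identity, if $x\in\textup{int}(F_1)\cap\textup{int}(F_2)$ then both $F_1$ and $F_2$ equal $\{y\in P\mid \langle a_i,y\rangle=b_i\ \forall i\in I_x\}$, hence $F_1=F_2$; combined with the existence step this proves the proposition.

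I expect the only real subtlety to be the bookkeeping around relative interiors in the observation above: one must be careful that the perturbed point $z''$ stays in the face $G$ (this is exactly what $z\in\textup{int}(G)$ provides) and that the functional being pushed to its bound is genuinely pinned at $z$. The degenerate case in which the relevant face is $P$ itself is a matter of convention rather than of mathematics, and I would dispose of it in one sentence, either by adopting the improper‑face convention of \cite{BG09} or by working inside $\textup{aff}(P)$ from the outset.
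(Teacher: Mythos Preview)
The paper does not prove this proposition at all: it is quoted as \cite[Theorem~1.10(d)]{BG09} and used as a black box, so there is no ``paper's own proof'' to compare against. Your argument is the standard active-constraint proof and is correct; the two edge cases you flag (the improper face $P$ when $I_x=\emptyset$ or $\sum_{i\in I_x}a_i=0$, and $G=P$ in the uniqueness step) are indeed disposed of by the convention from \cite{BG09} that $P$ counts as a face, exactly as you say.
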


%-----------------------------------------------------------%

	The face from Proposition \ref{rem:int} is the unique minimal  element in the set of faces of $P$ containing $x$ and is called \emph{minimal face} for $x$.

%-----------------------------------------------------------%

\begin{lemma}
\label{Lemma elements in the affine}
Let $P$ be a polyhedron of an $\mathbb{R}$-vector space $V$ and let $F$ be a face containing $x\in P$ with $\textup{aff}(F)=x+H$. If there exists $v$ such that $x\pm v\in P$, then $v\in H$.  
\end{lemma}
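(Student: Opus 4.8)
The plan is to reduce the statement to the defining property of a face: a face $F$ of a polyhedron $P$ is a convex subset with the property that whenever a point of $F$ lies in the relative interior of a segment contained in $P$, the whole segment lies in $F$. First I would fix a support hyperplane $H_0 = \{ y \in V \mid \ell(y) = c \}$ associated with $F$, so that $F = H_0 \cap P$ and, say, $\ell(y) \le c$ for all $y \in P$. Since $x \in F$ we have $\ell(x) = c$, while $\ell(x+v) \le c$ and $\ell(x-v) \le c$ because $x \pm v \in P$. Adding these two inequalities gives $2\ell(x) = \ell(x+v) + \ell(x-v) \le 2c$, with equality, which forces $\ell(x+v) = \ell(x-v) = c$. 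Hence both $x+v$ and $x-v$ lie in $H_0 \cap P = F$.

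Next I would upgrade this to a statement about $\mathrm{aff}(F) = x + H$. By definition $\mathrm{aff}(F)$ is the smallest affine subspace containing $F$; since $x, x+v, x-v$ all lie in $F$, they all lie in $x+H$. From $x \in x+H$ we get $0 \in H$ is automatic; from $x+v \in x+H$ we conclude $v \in H$, which is exactly the claim. (The point $x-v$ is not even needed for this last step once we know $x+v \in F$, but having both in $P$ was essential to deduce $x+v \in F$ in the first place.)

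I expect no serious obstacle here: the only mild subtlety is making sure the argument does not secretly assume $F$ is a proper face or that $H$ is nontrivial. If $F = P$ and $P$ is full-dimensional there may be no support hyperplane in the usual sense, so to be safe I would either invoke the standard fact that $P$ itself is a face of $P$ (with $\mathrm{aff}(P) = x+H$ and the conclusion $x \pm v \in P$ immediately giving $v \in H$), or phrase the halfspace argument so that it degenerates gracefully. With that caveat handled, the two displayed inequalities and the definition of $\mathrm{aff}$ close the proof.
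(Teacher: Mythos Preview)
Your proof is correct and follows essentially the same route as the paper's: both fix a support hyperplane defining $F$ and exploit the affinity of the associated functional on the pair $x\pm v$. The paper phrases this as a contradiction (if $v\notin H$ then $x+v$ and $x-v$ land on opposite sides of the hyperplane), whereas you argue directly that $\ell(x+v)+\ell(x-v)=2\ell(x)=2c$ forces both points into $F$; the substance is identical, and your attention to the degenerate case $F=P$ is a welcome addition.
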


%-----------------------------------------------------------%

\begin{proof}
Suppose that $x-v,x+v\in P$, and $v\notin H$. Let us consider a support hyperplane $H_{\alpha}$ of $P$ such that $F=H_{\alpha}\cap P$ and $\alpha:V\longrightarrow\mathbb{R}$ its affine form, that is,
	$$
	\alpha(z)=\beta(z)+a_0,\ \ a_0=\alpha(0)
	$$
for some linear map $\beta:V\longrightarrow\mathbb{R}$. Since $H_{\alpha}$ is a support hyperplane of $P$, 
	$$
	P\subseteq H_{\alpha}^+:=\left\{z\in V\mid \alpha(z)\geq 0\right\},\ \ \ \mbox{or }\ \ P\subseteq H_{\alpha}^-:=\left\{z\in V\mid \alpha(z)\leq 0\right\}.
	$$
But $x-v\notin H_{\alpha}^+$, and $x+v\notin H_{\alpha}^-$, which is a contradiction.
\end{proof}

%-----------------------------------------------------------%

	Given a polyhedron $\mathcal{P}\subseteq \mathbb{R}^{\frac{n(n+1)}{2}}$ and $X\in \mathcal{P}$, by $D_{X}(\mathcal{P})\subseteq\mathbb{R}^{\frac{n(n+1)}{2}}$ we will denote the set $\left\{Y\in \mathbb{R}^{\frac{n(n+1)}{2}}\mid X+Y,X-Y\in \mathcal{P}\right\}$.

%-----------------------------------------------------------%

\begin{remark}
\label{Rem:dimFaceMin}
Let $\mathcal{P}$ be a polyhedron and $X\in \mathcal{P}$. If $\{Y_1,\ldots,Y_k\}$ is a maximal set of linearly independent vectors in $D_{X}(\mathcal{P})$, then the dimension of the minimal face in $\mathcal{P}$ containing $X$ is $k$. 
\end{remark}

%-----------------------------------------------------------%
		%POLYHEDRA ASSOCIATED WITH SETS OF RELATIONS%
%-----------------------------------------------------------%

\subsection{Polyhedra associated with sets of relations}

In this section, associated with any set of relations $\mathcal{C}$ we will define several polyhedra in  $\mathbb{R}^{\frac{n(n+1)}{2}}$, which will be the main objects of study until the end of this paper.

%-----------------------------------------------------------%

\begin{definition}
\label{ctab2}
Let $\mathcal{C}$ be any set of relations, $X\in\mathbb{R}^{\frac{n(n+1)}{2}}$ is called \emph{$\mathcal{C}$-pattern}, if $x_{ij}\geq x_{rs}$ for any $((i,j);(r,s))\in\mathcal{C}$. 
\end{definition}

%-----------------------------------------------------------%

\begin{remark}	
If $T(L)\in T_n(\mathbb{R})$ satisfies $\mathcal{C}$ (see Definition \ref{def:C-realization}), then $L$ is a $\mathcal{C}$-pattern. The converse is not necessarily true, in fact, let  $\mathcal{C}$ be the set of relations of Example \ref{example realizations} and $T(X)$ the tableau
	\begin{center}
	\Stone{\mbox{ \scriptsize {$3$}}}\Stone{\mbox{ \scriptsize {$3$}}}\Stone{\mbox{ \scriptsize {$5$}}}\Stone{\mbox{ \scriptsize {$\frac{7}{2}$}}}\\[0.2pt]
	\Stone{\mbox{ \scriptsize {$1$}}}\Stone{\mbox{ \scriptsize {$\frac{5}{2}$}}}\Stone{\mbox{ \scriptsize {$4$}}}\\[0.2pt]
 	\Stone{\mbox{ \scriptsize {$3$}}}\Stone{\mbox{ \scriptsize {$1$}}}\\[0.2pt]
	\Stone{\mbox{ \scriptsize {$\sqrt{2}$}}}\\
	\end{center}
in this case $X$ is a $\mathcal{C}$-pattern, and $T(X)$ that does not satisfies $\mathcal{C}$.
\end{remark}

%-----------------------------------------------------------%

For $1\leq k\leq n$ we define $R_{k}:\mathbb{C}^{\frac{n(n+1)}{2}}\longrightarrow\mathbb{C}$ given by $R_{k}(X):=\sum\limits_{i=1}^{k}x_{ki}$. The \emph{$k$th weight linear map}, $w_{k}:\mathbb{C}^{\frac{n(n+1)}{2}}\longrightarrow\mathbb{C}$ is defined by 
	\begin{displaymath}
	w_{k}(X):=
	\begin{cases}
	R_{k}(X)-R_{k-1}(X), & \text{if $2\leq k\leq n$;}\\
	x_{11}, &  \text{if $k=1$.}
	\end{cases}
	\end{displaymath}

%-----------------------------------------------------------%

\begin{definition}
\label{poliedro}
Fix $\lambda,\mu\in\R^{n}$ and $\mathcal{C}$ a set of relations. We consider the following polyhedra in $\mathbb{R}^{\frac{n(n+1)}{2}}$ associated with $\mathcal{C}$		
	\begin{align*}
	P_{\mathcal{C}}:=&\left\{X\in\mathbb{R}^{\frac{n(n+1)}{2}}\mid X \text{ is a }\mathcal{C}\text{-pattern}\right\},\\
	P_{\mathcal{C}}(\lambda):=&\left\{X\in P_{\mathcal{C}}\mid x_{nj}=\lambda_j\text{ for all }1\leq j\leq n\right\},\\
	P_{\mathcal{C}}(\lambda,\mu):=&\left\{X\in P_{\mathcal{C}}(\lambda)\mid w_{i}(X)=\mu_{i}\text{ for all }1 \leq i\leq n\right\}.
	\end{align*}

\end{definition}

%-----------------------------------------------------------%

	Given $y\in\mathbb{R}$, the point $Y$ with entries $y_{ij}=y$ is a $\mathcal{C}$-pattern for any $\mathcal{C}$, in particular  $P_{\mathcal{C}}$ is always unbounded. However, $P_{\mathcal{C}}(\lambda)$ is a polytope if and only if the maximal and minimal points with respect to $\succeq_{\mathcal{C}}$, belong to the set $\{(n,1),\ldots,(n,n)\}$.
\begin{remark}	
	If $\lambda\in \mathbb{Z}^{n}$ is an integral dominant $\mathfrak{gl}_{n}$-weight with $\lambda_n\geq 0$ and $\mu \in \mathbb{Z}^{n}$ a weight of $L(\lambda)$, then $P_{\mathcal{C}_1}(\lambda,\mu)$ is a polytope called \emph{Gelfand--Tsetlin polytope} associated with $\lambda$ and $\mu$. 
\end{remark}

%-----------------------------------------------------------%

\begin{lemma}
\label{somazeroger}
Let $X$ be a $\mathcal{C}$-pattern in $P_{\mathcal{C}}(\lambda,\mu)$, and $Y$ a  $\mathcal{C}$-pattern.
	\begin{itemize}
	\item[(i)] $X+Y\in P_{\mathcal{C}}(\lambda)$ if and only if $y_{ni}=0$ for any $1\leq i\leq n$.
	\item[(ii)] $X+Y\in P_{\mathcal{C}}(\lambda,\mu)$ if and only if $X+Y\in P_{\mathcal{C}}(\lambda)$, and $R_{k}(Y)=0$ for any $1\leq k\leq n$.
	\end{itemize}
\end{lemma}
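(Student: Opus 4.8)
The plan is to verify both equivalences directly from the definitions of the polyhedra $P_{\mathcal{C}}(\lambda)$ and $P_{\mathcal{C}}(\lambda,\mu)$ in Definition~\ref{poliedro}, using the hypothesis that $X$ is a $\mathcal{C}$-pattern in $P_{\mathcal{C}}(\lambda,\mu)$ and $Y$ is a $\mathcal{C}$-pattern. The key preliminary observation is that the sum of two $\mathcal{C}$-patterns is again a $\mathcal{C}$-pattern: if $x_{ij}\ge x_{rs}$ and $y_{ij}\ge y_{rs}$ for every $((i,j);(r,s))\in\mathcal{C}$, then $(x_{ij}+y_{ij})\ge(x_{rs}+y_{rs})$, so $X+Y\in P_{\mathcal{C}}$ automatically. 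Thus in both parts the only conditions that need checking are the affine constraints defining $P_{\mathcal{C}}(\lambda)$ and $P_{\mathcal{C}}(\lambda,\mu)$ inside $P_{\mathcal{C}}$; there is no combinatorial or polyhedral subtlety, just bookkeeping with linear functionals. I would also record the elementary linearity facts $R_k(X+Y)=R_k(X)+R_k(Y)$ and $w_i(X+Y)=w_i(X)+w_i(Y)$, which are immediate from the definitions of $R_k$ and $w_i$.

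For part~(i): by Definition~\ref{poliedro}, $X+Y\in P_{\mathcal{C}}(\lambda)$ iff $X+Y$ is a $\mathcal{C}$-pattern and $(X+Y)_{nj}=\lambda_j$ for all $1\le j\le n$. The first condition holds by the observation above. Since $X\in P_{\mathcal{C}}(\lambda,\mu)\subseteq P_{\mathcal{C}}(\lambda)$ we have $x_{nj}=\lambda_j$, so $(X+Y)_{nj}=\lambda_j+y_{nj}$, and this equals $\lambda_j$ for all $j$ if and only if $y_{nj}=0$ for all $1\le j\le n$. That is exactly the asserted equivalence.

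For part~(ii): by Definition~\ref{poliedro}, $X+Y\in P_{\mathcal{C}}(\lambda,\mu)$ iff $X+Y\in P_{\mathcal{C}}(\lambda)$ and $w_i(X+Y)=\mu_i$ for all $1\le i\le n$. Using $w_i(X+Y)=w_i(X)+w_i(Y)$ and $w_i(X)=\mu_i$ (because $X\in P_{\mathcal{C}}(\lambda,\mu)$), the second batch of conditions becomes $w_i(Y)=0$ for all $i$. It then remains to note that, for a fixed $Y$, the system $\{w_i(Y)=0 : 1\le i\le n\}$ is equivalent to the system $\{R_k(Y)=0 : 1\le k\le n\}$: indeed $w_1(Y)=R_1(Y)$ (since $w_1(Y)=y_{11}=R_1(Y)$), and for $2\le k\le n$ we have $w_k(Y)=R_k(Y)-R_{k-1}(Y)$, so by induction on $k$ the vanishing of all $w_i(Y)$ forces the vanishing of all $R_k(Y)$, and conversely $R_k(Y)=0$ for all $k$ gives $w_k(Y)=0$ for all $k$. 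Combining this with part~(i) yields the stated characterization.

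The main obstacle here is essentially nonexistent: the result is a routine unwinding of definitions, and the only point requiring a moment's care is the change of variables between the weight functionals $w_i$ and the partial-row-sum functionals $R_k$ in part~(ii), which is handled by the telescoping relation $R_k=\sum_{i=1}^{k} w_i$. I would therefore keep the proof short, stating the $\mathcal{C}$-pattern closure under addition, then dispatching (i) and (ii) in turn with the linearity of $R_k$ and $w_i$ and the telescoping identity.
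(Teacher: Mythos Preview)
Your proof is correct and follows essentially the same approach as the paper: both note that the sum of $\mathcal{C}$-patterns is a $\mathcal{C}$-pattern, dispatch (i) via $x_{nj}=\lambda_j$, and handle (ii) through the linearity of $R_k$ together with the telescoping relation $R_k=\sum_{i\le k} w_i$. The only cosmetic difference is that the paper works directly with the condition $R_k(X+Y)=\sum_{i\le k}\mu_i$ rather than first reducing to $w_i(Y)=0$, but the content is identical.
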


%-----------------------------------------------------------%

\begin{proof}
As the tableau associated with the sum of  $\mathcal{C}$-patterns is a $\mathcal{C}$-pattern, in order  to prove (i) we just note that $x_{ni}=\lambda_{i}=x_{ni}+y_{ni}$ if and only if $y_{ni}=0$. To prove (ii) we note that $X+Y\in P_{\mathcal{C}}(\lambda)$ belongs to $P_{\mathcal{C}}(\lambda,\mu)$ if and only if $\sum\limits_{i=1}^{k}\mu_{i}=R_{k}(X+Y)$, and $ R_{k}(X+Y)=R_{k}(X)+R_{k}(Y)=\sum\limits_{i=1}^{k}\mu_{i}+R_{k}(Y)$ for any $1\leq k\leq n$.
\end{proof}

%-----------------------------------------------------------% 

	The following results establish a direct connection between relation modules and the polyhedra $P_{\mathcal{C}}$, $P_{\mathcal{C}}(\lambda)$, and $P_{\mathcal{C}}(\lambda,\mu)$ from Definition \ref{poliedro}. Recall that tableaux in $T_{n}(L+{\mathbb Z}_0^\frac{n(n+1)}{2})$ are called $L$-integral. We say that a point $X$ is \emph{$L$-integral} if $T(X)$ is an $L$-integral tableau.

%-----------------------------------------------------------% 

\begin{theorem}
\label{number of L-integral tableaux}
Let $\mathcal{C}$ be any admissible set of relations, $T(L)$ a $\mathcal{C}$-realization, and $V=V_{\mathcal{C}}(T(L))$ the corresponding relation $\mathfrak{gl}_n$-module. Set $\lambda=(l_{n1},\ldots,l_{nn})$, and $\mu=(w_{1}(L),w_{2}(L),\ldots,w_{n}(L))\in\mathfrak{h}^{*}$. 
	\begin{itemize}
	\item[(i)] The polyhedra $P_{\mathcal{C}}$ and $P_{\mathcal{C}}(\lambda)$ have the same number of $L$-integral points, and this number is equal to $\dim(V)$.
	\item[(ii)] The number of $L$-integral points in $P_{\mathcal{C}}(\lambda,\mu)$ is equal to $\dim(V_{\mu})$.
	\end{itemize}
\end{theorem}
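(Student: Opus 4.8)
The plan is to identify the $L$-integral points of $P_{\mathcal{C}}$, $P_{\mathcal{C}}(\lambda)$, and $P_{\mathcal{C}}(\lambda,\mu)$ with the tableaux parametrizing the bases of $V$ and of its weight spaces, respectively, and then count. First I would treat (i). Given an $L$-integral point $X$, the condition of being a $\mathcal{C}$-pattern with integrality $x_{ij}-l_{ij}\in\mathbb{Z}$ says exactly that $l_{ij}-l_{rs}\in\mathbb{Z}_{\geq 0}$ is upgraded to $x_{ij}-x_{rs}\in\mathbb{Z}_{\geq 0}$ for all $((i,j);(r,s))\in\mathcal{C}$; but I must be careful, because a $\mathcal{C}$-pattern need not give a tableau \emph{satisfying} $\mathcal{C}$ in the sense of Definition \ref{def:C-realization}, as the remark after Definition \ref{poliedro} illustrates. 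The key observation is that, since $T(L)$ is a $\mathcal{C}$-realization and $X$ is $L$-integral, $x_{ki}-x_{kj}=l_{ki}-l_{kj}+(\text{integer})$, so $x_{ki}-x_{kj}\in\mathbb{Z}$ if and only if $(k,i)$ and $(k,j)$ lie in the same connected component of $G(\mathcal{C})$ — that is, an $L$-integral $\mathcal{C}$-pattern $X$ automatically is a tableau satisfying $\mathcal{C}$, and in fact $T(X)\in\mathcal{B}_{\mathcal{C}}(T(L))$. Conversely every tableau in $\mathcal{B}_{\mathcal{C}}(T(L))$ gives an $L$-integral point of $P_{\mathcal{C}}$. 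This establishes a bijection $\mathcal{B}_{\mathcal{C}}(T(L))\longleftrightarrow\{L\text{-integral points of }P_{\mathcal{C}}\}$.

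Next, the equality of the count with $\dim(V)$ is immediate: by Definition \ref{def:adm}, $\mathcal{B}_{\mathcal{C}}(T(L))$ is by construction a basis of $V=V_{\mathcal{C}}(T(L))$, so $\dim(V)=\#\mathcal{B}_{\mathcal{C}}(T(L))$ (possibly infinite, in which case the statement is read as an equality of cardinalities). For the comparison between $P_{\mathcal{C}}$ and $P_{\mathcal{C}}(\lambda)$: an $L$-integral point $X$ has $x_{ni}-l_{ni}\in\mathbb{Z}$, and since $\lambda_i=l_{ni}$, an $L$-integral tableau automatically has top row $\lambda$ precisely when $x_{ni}=l_{ni}$. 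But here recall that $L$-integral means $X\in T_n(L+\mathbb{Z}_0^{n(n+1)/2})$, i.e. by the very definition of $\mathbb{Z}_0^{n(n+1)/2}$ the top-row entries are \emph{unchanged}: $x_{ni}=l_{ni}=\lambda_i$ for all $i$. Hence every $L$-integral point of $P_{\mathcal{C}}$ already lies in $P_{\mathcal{C}}(\lambda)$, so the two sets of $L$-integral points literally coincide, and (i) follows.

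For (ii), I would intersect the bijection from (i) with the weight condition. An $L$-integral point $X\in P_{\mathcal{C}}(\lambda)$ lies in $P_{\mathcal{C}}(\lambda,\mu)$ iff $w_i(X)=\mu_i=w_i(L)$ for all $i$; translating through the bijection, $T(X)\in\mathcal{B}_{\mathcal{C}}(T(L))$ lies in the subset cut out by $\sum_{t=1}^k x_{kt}-\sum_{t=1}^{k-1}x_{k-1,t}=\mu_k$ for all $k$. By Remark \ref{rem:eigenvalues} (Equation \eqref{E_{k,k}}), this is exactly the condition that $T(X)$ be an $E_{kk}$-eigenvector of eigenvalue $\mu_k$ for every $k$, i.e.\ that $T(X)\in V_\mu$; since $\mathcal{B}_{\mathcal{C}}(T(L))$ is an eigenbasis for the Cartan action, the tableaux in $\mathcal{B}_{\mathcal{C}}(T(L))$ with weight $\mu$ form a basis of $V_\mu$. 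Therefore the number of $L$-integral points in $P_{\mathcal{C}}(\lambda,\mu)$ equals $\dim(V_\mu)$.

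The main obstacle I anticipate is the subtlety already flagged in the excerpt: a general $\mathcal{C}$-pattern need not satisfy $\mathcal{C}$ in the tableau sense, so the identification in (i) genuinely uses $L$-integrality together with the hypothesis that $T(L)$ is a $\mathcal{C}$-realization (not merely that it satisfies $\mathcal{C}$) — this is what forces $x_{ki}-x_{kj}\in\mathbb{Z}$ to be equivalent to connectedness in $G(\mathcal{C})$ rather than merely implied by it. Everything else is bookkeeping: matching the defining inequalities and linear equations of the polyhedra with the conditions "satisfies $\mathcal{C}$", "is $L$-integral", and "has weight $\mu$" that define $\mathcal{B}_{\mathcal{C}}(T(L))$ and its weight-graded pieces, and then invoking Definition \ref{def:adm} and Remark \ref{rem:eigenvalues}.
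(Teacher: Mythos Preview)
Your proposal is correct and follows the same route as the paper: identify the $L$-integral points of $P_{\mathcal{C}}$ (equivalently of $P_{\mathcal{C}}(\lambda)$, since $\mathbb{Z}_0^{n(n+1)/2}$ fixes the top row) with $\mathcal{B}_{\mathcal{C}}(T(L))$, and then intersect with the weight equations and invoke the Cartan eigenbasis property for (ii). The paper's proof compresses all of this into ``by definition / by construction''; you have simply unpacked it.

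One small comment: the passage where you invoke the $\mathcal{C}$-realization hypothesis is slightly misaimed. To show that an $L$-integral $\mathcal{C}$-pattern $X$ satisfies $\mathcal{C}$ you only need, for each $((i,j);(r,s))\in\mathcal{C}$, that $x_{ij}-x_{rs}\in\mathbb{Z}$; this follows from $x_{ij}-x_{rs}=(x_{ij}-l_{ij})-(x_{rs}-l_{rs})+(l_{ij}-l_{rs})$, using $L$-integrality for the first two summands and the fact that $T(L)$ \emph{satisfies} $\mathcal{C}$ for the third. The stronger ``$\mathcal{C}$-realization'' condition (about same-row differences and connected components) is not what drives this step; it enters only indirectly, through admissibility, to guarantee that $V_{\mathcal{C}}(T(L))$ is a genuine $\mathfrak{gl}_n$-module with the stated basis and Cartan action.
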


%-----------------------------------------------------------%

\begin{proof}
 First we note that by definition the sets of $L$-integral points of $P_{\mathcal{C}}$ and $P_{\mathcal{C}}(\lambda)$ coincide. By construction, $T_n\left(P_{\mathcal{C}}(\lambda)\right)\cap T_{n}(L+\mathbb{Z}_0^{\frac{n(n+1)}{2}})$ is a basis of the module $V_{\mathcal{C}}(T(L))$, which implies (i). Analogously, $T_n\left(P_{\mathcal{C}}(\lambda,\mu)\right)\cap T_{n}(L+\mathbb{Z}_0^{\frac{n(n+1)}{2}})$ is a basis of the weight space $V_{\mu}$, implying (ii). 
\end{proof}

%-----------------------------------------------------------%

	As a consequence of Theorem \ref{number of L-integral tableaux} we have the following:

%-----------------------------------------------------------%

\begin{corollary} 
Let $\mathcal{C}$ be an admissible set of relations, $T(L)$ a $\mathcal{C}$-realization, $\lambda=(l_{n1},\ldots,l_{nn})$, and $\mu$ a weight of $V_{\mathcal{C}}(T(L))$. 
	\begin{itemize}
	\item[(i)]  Set $\mathcal{C}=\mathcal{C}_{1}$. In this case, the module $V_{\mathcal{C}}(T(L))$ is isomorphic to the simple finite dimensional module $L(\lambda)$. 
		\begin{itemize}
		\item The number of $L$-integral points in $P_{\mathcal{C}}(\lambda)$ is finite and equal to the dimension of $L(\lambda)$.
		\item The number of $L$-integral points  in $P_{\mathcal{C}}(\lambda,\mu)$ is finite and equal to the dimension of the weight space  $L(\lambda)_{\mu}$.
		\end{itemize}

	\item[(ii)] Set $\mathcal{C}=\mathcal{C}_{1}^{+}$. In this case, the module $V_{\mathcal{C}}(T(L))$ is isomorphic to the generic Verma module $M(\lambda)$ (see \cite[Example 5.10]{FRZ19}), and
		\begin{itemize}
		\item $P_{\mathcal{C}}(\lambda)$ contains infinitely many $L$-integral  points.
		\item The number of $L$-integral points in $P_{\mathcal{C}}(\lambda,\mu)$ is $\dim (M(\lambda)_{\mu})<\infty$.
		\end{itemize}

	\item[(iii)]  Set $\mathcal{C}=\mathcal{C}_{2}$. In this case, $\tilde{\lambda}=(l_{n2},\ldots,l_{nn})$ is a dominant $\mathfrak{gl}_{n-1}$-weight, and $V_{\mathcal{C}}(T(L))$ is an infinite-dimensional module with finite weight spaces of dimension $\dim(L(\tilde{\lambda}))$ (see Example \ref{Some relation modules}).
		\begin{itemize}
		\item $P_{\mathcal{C}}(\lambda)$ contains infinitely many $L$-integral points.
		\item If $\mu$ is a weight of $V_{\mathcal{C}}(T(L))$, the number of $L$-integral points in $P_{\mathcal{C}}(\lambda,\mu)$ is equal to $\dim (L(\tilde{\lambda}))$.
		\end{itemize}
	\end{itemize}

\end{corollary}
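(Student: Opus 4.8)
The plan is to read off all three items from Theorem~\ref{number of L-integral tableaux}, once the module $V:=V_{\mathcal{C}}(T(L))$ has been identified in each of the three cases. Since $\mathcal{C}_1$, $\mathcal{C}_1^+$ and $\mathcal{C}_2$ are admissible (Remark following Theorem~\ref{thm:adm}), Theorem~\ref{number of L-integral tableaux}(i) already tells us that $P_{\mathcal{C}}$ and $P_{\mathcal{C}}(\lambda)$ carry exactly $\dim V$ many $L$-integral points, and Theorem~\ref{number of L-integral tableaux}(ii) that $P_{\mathcal{C}}(\lambda,\mu_0)$ carries exactly $\dim V_{\mu_0}$ many, for the particular weight $\mu_0=(w_1(L),\dots,w_n(L))$. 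To handle an \emph{arbitrary} weight $\mu$ of $V$ I would first replace $T(L)$ by a basis tableau $T(M)\in\mathcal{B}_{\mathcal{C}}(T(L))$ of weight $\mu$ (one exists because $V_\mu\neq 0$): then $M\in L+\mathbb{Z}_0^{\frac{n(n+1)}{2}}$, so $T(M)$ is $L$-integral with the same top row $\lambda$, it is again a $\mathcal{C}$-realization (the congruence classes of the row entries are unchanged), it has $\mathcal{B}_{\mathcal{C}}(T(M))=\mathcal{B}_{\mathcal{C}}(T(L))$ and hence $V_{\mathcal{C}}(T(M))=V$, and $w_i(M)=\mu_i$ by Equation~\eqref{E_{k,k}}. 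Applying Theorem~\ref{number of L-integral tableaux}(ii) to $T(M)$ then gives that $P_{\mathcal{C}}(\lambda,\mu)$ contains exactly $\dim V_\mu$ many $L$-integral points, for every weight $\mu$.

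With this reduction in hand the three items are direct substitutions. For (i), Theorem~\ref{GT Theorem} identifies $V$ with the simple finite dimensional module $L(\lambda)$, so $P_{\mathcal{C}_1}(\lambda)$ has $\dim L(\lambda)<\infty$ many $L$-integral points and $P_{\mathcal{C}_1}(\lambda,\mu)$ has $\dim L(\lambda)_\mu<\infty$ many. For (ii), Example~\ref{Some relation modules}(i) (i.e.\ \cite[Example 5.10]{FRZ19}) identifies $V$ with the generic Verma module $M(\lambda)$, which is infinite dimensional; hence $P_{\mathcal{C}_1^+}(\lambda)$ has infinitely many $L$-integral points, while the count in $P_{\mathcal{C}_1^+}(\lambda,\mu)$ equals $\dim M(\lambda)_\mu$. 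For (iii), the $\mathcal{C}_2$-realization conditions, restricted to the sub-triangle on the vertices with second index $\geq 2$, force $l_{n2}\geq l_{n3}\geq\cdots\geq l_{nn}$ with all consecutive differences integral, so $\tilde\lambda=(l_{n2},\dots,l_{nn})$ is a dominant $\mathfrak{gl}_{n-1}$-weight; by Example~\ref{Some relation modules}(ii), $V$ is infinite dimensional (so $P_{\mathcal{C}_2}(\lambda)$ has infinitely many $L$-integral points) and every weight space $V_\mu$ has dimension $\dim L(\tilde\lambda)$ via the explicit bijection $\psi_\mu$ of that example, whence $P_{\mathcal{C}_2}(\lambda,\mu)$ contains exactly $\dim L(\tilde\lambda)$ many $L$-integral points.

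The only step that is not pure bookkeeping is the finiteness $\dim M(\lambda)_\mu<\infty$ in (ii). I would obtain it either from \cite[Example 5.10]{FRZ19}, or directly on the polyhedral side by checking that $P_{\mathcal{C}_1^+}(\lambda,\mu)$ is bounded: its points have top row fixed to $\lambda$ and all partial row-sums $R_k(X)$ fixed by $\mu$, while the relations of $\mathcal{C}_1^+$ impose $x_{i+1,j}\geq x_{i,j}$ for $1\leq j\leq i\leq n-1$; descending row by row from the top, each row then has its entries bounded above by the corresponding entries of the row above and has a prescribed sum, hence lies in a bounded set. Therefore $P_{\mathcal{C}_1^+}(\lambda,\mu)$ is a polytope, and in particular contains only finitely many $L$-integral points. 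Everything else follows by feeding the module identifications of Theorem~\ref{GT Theorem} and Example~\ref{Some relation modules} into Theorem~\ref{number of L-integral tableaux}.
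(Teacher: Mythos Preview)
Your proof is correct and follows the same route as the paper, which states the corollary as a direct consequence of Theorem~\ref{number of L-integral tableaux} without giving any further argument. You have in fact filled in two details the paper leaves implicit: the passage from the specific weight $\mu_0=(w_1(L),\dots,w_n(L))$ in Theorem~\ref{number of L-integral tableaux} to an arbitrary weight $\mu$ of $V$ (by replacing $T(L)$ with a basis tableau $T(M)$ of weight $\mu$, which is again a $\mathcal{C}$-realization with the same basis set), and the finiteness of $\dim M(\lambda)_\mu$ via the boundedness of $P_{\mathcal{C}_1^+}(\lambda,\mu)$.
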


%-----------------------------------------------------------%
					%FACES OF POLYHEDRA%
%-----------------------------------------------------------%

\begin{definition}
\label{mosaico}
Let $\mathcal{C}$ be a set of relations, and $X\in\mathbb{R}^{\frac{n(n+1)}{2}}$ be a $\mathcal{C}$-pattern. Associated with $\mathcal{C}$ and $T(X)$  we have an equivalence relation in $\mathfrak{V}$, where $(i,j)$ is related with $(r,s)$ if and only if there exists a walk in $G(\mathcal{C})$ connecting $(i,j)$ and $(r,s)$ with the entries of $X$ associated with the vertices in the walk being equal. The partition of $\mathfrak{V}$ induzed by the relation is called \emph{tiling}, and is denoted  by $\mathcal{M}_{\mathcal{C}}(X)$.  The equivalence classes will be called \emph{tiles}. In particular, if $(i,j)\notin\mathfrak{V}(\mathcal{C})$, the set $\{(i,j)\}$ is a tile.   
\end{definition}

%-----------------------------------------------------------%

	Set $\Lambda\subseteq \{1,n\}$.  A tile $\mathcal{M}$ will be called \emph{$\Lambda$-free} if $\mathcal{M}\cap \{(k,j)\  |\ 1\leq j\leq k\}=\emptyset$ for any $k\in \Lambda$. Moreover, if $(i,j)$ belongs to a $\Lambda$-free tile, it  will be called \emph{$\Lambda$-free}. Note that any tile is $\emptyset$-free tile.

%-----------------------------------------------------------%

\begin{remark}
In this paper we will be interested in the particular cases $\Lambda_1:=\{n\}$, and $\Lambda_2:=\{1,n\}$. It is worth to mention that in \cite{LM04} the authors call free tiles what we will call here $\Lambda_2$-free tiles.
\end{remark}
 
%-----------------------------------------------------------%

	Figure $\ref{figctab}$ shows a tableau in $T_{5}(\mathbb{R})$, and its corresponding tiling. The tiles without colouring in the right hand side are the $\Lambda_2$-free tiles.

%-----------------------------------------------------------%

\begin{figure}
\begin{minipage}[l]{6cm}
%\label{figctab}
	\begin{center}
	\Stone{\mbox{ \scriptsize {$9$}}}\Stone{\mbox{ \scriptsize {$8$}}}\Stone{\mbox{ \scriptsize {$6$}}}\Stone{\mbox{ \scriptsize {$5$}}}\Stone{\mbox{ \scriptsize {$3$}}}\\[0.2pt]
	\Stone{\mbox{ \scriptsize {$8$}}}\Stone{\mbox{ \scriptsize {$5$}}}\Stone{\mbox{ \scriptsize {$5$}}}\Stone{\mbox{ \scriptsize {$4$}}}\\[0.2pt]
	\Stone{\mbox{ \scriptsize {$3$}}}\Stone{\mbox{ \scriptsize {$3$}}}\Stone{\mbox{ \scriptsize {$0$}}}\\[0.2pt]
 	\Stone{\mbox{ \scriptsize {$3$}}}\Stone{\mbox{ \scriptsize {$-1$}}}\\[0.2pt]
	\Stone{\mbox{ \scriptsize {$-2$}}}\\
\end{center}

\end{minipage}
\begin{minipage}[r]{6cm}
\includegraphics[width=6cm]{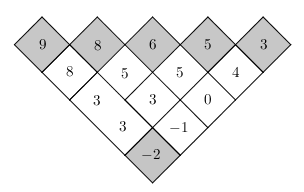}
\end{minipage}
\caption{Tiling of a $\mathcal{C}_1^{+}$-pattern}\label{figctab}
\end{figure}

%-----------------------------------------------------------%

\begin{lemma}
\label{same entries in the tiles}
Let $\mathcal{C}$ be a set of relations, $X$ a $\mathcal{C}$-pattern, $\lambda=(x_{n1},\ldots,x_{nn})$, and $\mu=(w_{1}(X),w_{2}(X),\ldots,w_{n}(X))$. 
  \begin{itemize}
 \item[(i)] If $Y\in D_{X}(\mathcal{P}_{\mathcal{C}})$, then $y_{ij}=y_{rs}$, whenever $(i,j)$ and $(r,s)$ are in the same  tile of $\mathcal{M}_{\mathcal{C}}(X)$. 
  \item[(ii)]   If $Y\in D_{X}(\mathcal{P}_{\mathcal{C}}(\lambda))$, then $y_{ij}=0$, whenever $(i,j)$ is not $\Lambda_1$-free.
 \item[(iii)]  If $Y\in D_{X}(\mathcal{P}_{\mathcal{C}}(\lambda,\mu))$, then $y_{ij}=0$, whenever $(i,j)$ is not $\Lambda_2$-free.   

\end{itemize}

\end{lemma}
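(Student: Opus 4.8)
The plan is to handle the three items in order, using Lemma \ref{Lemma elements in the affine} and Lemma \ref{somazeroger} as the main tools, together with the combinatorial description of tiles in Definition \ref{mosaico}.

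For item (i), suppose $Y \in D_X(\mathcal{P}_{\mathcal{C}})$, so $X \pm Y \in P_{\mathcal{C}}$. First I would reduce to a single arrow: if $(i,j)$ and $(r,s)$ lie in the same tile, there is a walk $(i,j) = v_0, v_1, \dots, v_m = (r,s)$ in $G(\mathcal{C})$ all of whose vertices carry the same $X$-entry, so it suffices to show $y_{v_\ell} = y_{v_{\ell+1}}$ whenever there is an arrow between $v_\ell$ and $v_{\ell+1}$ (in either direction) and $x_{v_\ell} = x_{v_{\ell+1}}$. Say the arrow is $((a,b);(c,d)) \in \mathcal{C}$ with $x_{ab} = x_{cd}$. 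Being a $\mathcal{C}$-pattern, $X+Y$ gives $x_{ab} + y_{ab} \geq x_{cd} + y_{cd}$, hence $y_{ab} \geq y_{cd}$; applying the same to $X - Y$ gives $y_{ab} \leq y_{cd}$, so $y_{ab} = y_{cd}$. Chaining along the walk yields $y_{ij} = y_{rs}$.

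For item (ii), take $Y \in D_X(\mathcal{P}_{\mathcal{C}}(\lambda))$. Then $X \pm Y \in P_{\mathcal{C}}(\lambda)$, so by Lemma \ref{somazeroger}(i) applied to $X$ and $\pm Y$ (note $X$ is assumed to be a $\mathcal{C}$-pattern in $P_{\mathcal{C}}(\lambda,\mu)$; here I would instead invoke Lemma \ref{somazeroger}(i) directly with the hypothesis that $X$ and $X\pm Y$ lie in $P_{\mathcal{C}}(\lambda)$, or re-derive the trivial fact $x_{ni}=\lambda_i=x_{ni}\pm y_{ni}$), we get $y_{ni} = 0$ for all $1 \le i \le n$. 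Combining with part (i): if $(i,j)$ is not $\Lambda_1$-free, its tile $\mathcal{M}$ meets row $n$, i.e.\ contains some $(n,t)$; since $D_X(\mathcal{P}_{\mathcal{C}}(\lambda)) \subseteq D_X(\mathcal{P}_{\mathcal{C}})$, part (i) gives $y_{ij} = y_{nt} = 0$.

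For item (iii), take $Y \in D_X(\mathcal{P}_{\mathcal{C}}(\lambda,\mu))$, so $X \pm Y \in P_{\mathcal{C}}(\lambda,\mu)$. By Lemma \ref{somazeroger}(ii), $R_k(\pm Y) = 0$, hence $R_k(Y) = 0$ for all $1 \le k \le n$, and also (from part (ii), since $D_X(\mathcal{P}_{\mathcal{C}}(\lambda,\mu)) \subseteq D_X(\mathcal{P}_{\mathcal{C}}(\lambda))$) $y_{ij} = 0$ whenever $(i,j)$ is not $\Lambda_1$-free. It remains to show $y_{ij} = 0$ when $(i,j)$ lies in a tile meeting row $1$, i.e.\ the tile contains $(1,1)$. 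The hard part is here: I must use $R_k(Y) = 0$ to kill entries in a tile containing $(1,1)$. Since $R_1(Y) = y_{11} = 0$ and, by part (i), every entry of $Y$ on the tile of $(1,1)$ equals $y_{11} = 0$, we immediately get $y_{ij} = 0$ for all $(i,j)$ in that tile. Thus every non-$\Lambda_2$-free vertex is either non-$\Lambda_1$-free (handled by part (ii)) or lies in the tile of $(1,1)$ (handled by $R_1(Y)=0$ and part (i)), and in all cases $y_{ij}=0$. The only subtlety to check carefully is that "not $\Lambda_2$-free" means the tile meets row $1$ or row $n$, so these two cases are exhaustive.

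The main obstacle I anticipate is purely bookkeeping: making sure the inclusions $D_X(\mathcal{P}_{\mathcal{C}}(\lambda,\mu)) \subseteq D_X(\mathcal{P}_{\mathcal{C}}(\lambda)) \subseteq D_X(\mathcal{P}_{\mathcal{C}})$ are used correctly so that part (i) can be applied within each successive setting, and correctly translating "$(i,j)$ is not $\Lambda_r$-free" into "the tile of $(i,j)$ intersects $\{(k,\cdot)\}$ for some $k \in \Lambda_r$." No delicate estimates are needed; the argument is a chain of sign comparisons (for (i)) plus the linear constraints $x_{ni} = \lambda_i$ and $R_k = \sum_{i\le k}\mu_i$ (for (ii), (iii)).
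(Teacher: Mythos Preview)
Your proposal is correct and follows essentially the same approach as the paper: reduce (i) to a single arrow and use the two inequalities from $X\pm Y\in P_{\mathcal C}$ together with $x_{ij}=x_{rs}$, then derive (ii) and (iii) from (i) via the inclusions $P_{\mathcal C}(\lambda,\mu)\subseteq P_{\mathcal C}(\lambda)\subseteq P_{\mathcal C}$ and the vanishing $y_{ni}=0$, $y_{11}=R_1(Y)=0$ coming from Lemma~\ref{somazeroger}. Your aside that Lemma~\ref{somazeroger} nominally assumes $Y$ is a $\mathcal C$-pattern, while here one only needs the trivial equalities $x_{ni}=\lambda_i=x_{ni}\pm y_{ni}$, is a fair observation but does not change the argument.
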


%-----------------------------------------------------------%

\begin{proof}
If $(i,j)=(r,s)$ there is nothing to prove, so we can assume $(i,j)$ and $(r,s)$ are in the same tile and $(i,j)\neq(r,s)$. Consider a walk in $G(\mathcal{C})$ connecting $(i,j)$ and $(r,s)$ with the entries associated with the vertices in the walk being equal.  Arguing by induction on the length of the walk, it is enough to prove the lemma for walks of length $1$. Without loss of generality we can assume that such a walk is given by the relation $((i,j);(r,s))\in\mathcal{C}$. In this case, the entries of any $\mathcal{C}$-pattern $Z$ should satisfy $z_{ij}\geq z_{rs}$. Now, by the hypothesis 
	$$
	x_{ij}+y_{ij}\geq x_{rs}+y_{rs},
	\ \ \
	x_{ij}-y_{ij}\geq x_{rs}-y_{rs}, \ \ \mbox{and }\
	x_{ij}= x_{rs}
	$$
which implies (i). 

	By Lemma \ref{somazeroger}(i) we have $y_{ni}=0$ for $1\leq i\leq n$, therefore (ii) is a consequence of item (i) and $\mathcal{P}_{\mathcal{C}}(\lambda)\subseteq \mathcal{P}_{\mathcal{C}}$. Moreover, under the assumptions of item (iii), Lemma \ref{somazeroger}(ii) implies also $y_{11}=0$, now (iii) follows from the definition of $\Lambda_2$-free tile and item (i). 
\end{proof}

%-----------------------------------------------------------%

\begin{lemma}
\label{No free tiles}
Let $\mathcal{C}$ be a set of relations, $X$ a $\mathcal{C}$-pattern, $\lambda=(x_{n1},\ldots,x_{nn})$, and $\mu=(w_{1}(X),w_{2}(X),\ldots,w_{n}(X))$. 

	\begin{itemize}
	\item[(i)] If $\mathcal{M}_{\mathcal{C}}(X)$ does not have $\Lambda_1$-free tiles, then $X$ is a vertex of $P_{\mathcal{C}}(\lambda)$.
	\item[(ii)] If $\mathcal{M}_{\mathcal{C}}(X)$ does not have $\Lambda_2$-free tiles, then $X$ is a vertex of $P_{\mathcal{C}}(\lambda,\mu)$.
	\end{itemize}
\end{lemma}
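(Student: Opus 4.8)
The plan is to use the characterization of the minimal face dimension via $D_X(\mathcal P)$ given in Remark~\ref{Rem:dimFaceMin}, together with Lemma~\ref{same entries in the tiles}. Recall that $X$ is a vertex of a polyhedron $\mathcal P$ exactly when the minimal face containing $X$ has dimension $0$, which by Remark~\ref{Rem:dimFaceMin} happens exactly when $D_X(\mathcal P)=\{0\}$, i.e. there is no nonzero $Y$ with $X\pm Y\in\mathcal P$. So both items reduce to showing: under the stated hypothesis on the absence of free tiles, any $Y\in D_X(\mathcal P_{\mathcal C}(\lambda))$ (resp. $D_X(\mathcal P_{\mathcal C}(\lambda,\mu))$) must be the zero vector.

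For item (i), suppose $Y\in D_X(P_{\mathcal C}(\lambda))$. I would split the index set $\mathfrak V$ into two parts. If $(i,j)\in\mathfrak V(\mathcal C)$, then $(i,j)$ lies in some tile of $\mathcal M_{\mathcal C}(X)$; since $\mathcal M_{\mathcal C}(X)$ has no $\Lambda_1$-free tiles, that tile meets row $n$, so $(i,j)$ is not $\Lambda_1$-free, and Lemma~\ref{same entries in the tiles}(ii) gives $y_{ij}=0$. If $(i,j)\notin\mathfrak V(\mathcal C)$, then $\{(i,j)\}$ is a tile by Definition~\ref{mosaico}; this singleton tile is $\Lambda_1$-free unless $(i,j)$ itself lies in row $n$ — but for $i<n$ we would need a different argument since a singleton in a non-top row is $\Lambda_1$-free, so Lemma~\ref{same entries in the tiles}(ii) does not directly apply. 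Here I use instead that $(i,j)\notin\mathfrak V(\mathcal C)$ means $(i,j)$ is the source/target of no arrow, so the constraint $x_{ij}\ge x_{rs}$ (or with roles reversed) never involves $x_{ij}$; thus shifting $y_{ij}$ alone cannot violate a $\mathcal C$-pattern inequality. Wait — but then $X\pm Y$ could still fail to be in $P_{\mathcal C}(\lambda)$ only through the row-$n$ condition, which forces $y_{ni}=0$ by Lemma~\ref{somazeroger}(i), and for $i<n$ nothing forces $y_{ij}=0$. This shows the hypothesis ``no $\Lambda_1$-free tiles'' as literally stated cannot be enough unless every $(i,j)\notin\mathfrak V(\mathcal C)$ with $i<n$ is excluded; I expect the intended reading is that $\mathfrak V(\mathcal C)=\mathfrak V\setminus(\text{row }n)$ or that the statement tacitly restricts to the relevant coordinates, and the honest fix is: the singleton tiles $\{(i,j)\}$ with $i<n$ are themselves $\Lambda_1$-free, so the hypothesis already forces $\mathfrak V(\mathcal C)\supseteq\{(i,j):i<n\}$, and then the argument of the first case covers everything.

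Granting that, the proof of (i) is: for every $(i,j)$ with $i<n$, the tile of $(i,j)$ is non-$\Lambda_1$-free by hypothesis, so $y_{ij}=0$ by Lemma~\ref{same entries in the tiles}(ii); and $y_{ni}=0$ by Lemma~\ref{somazeroger}(i). Hence $Y=0$, so $D_X(P_{\mathcal C}(\lambda))=\{0\}$ and $X$ is a vertex. For item (ii) the argument is identical with $\Lambda_2$ in place of $\Lambda_1$: if $Y\in D_X(P_{\mathcal C}(\lambda,\mu))$ then $Y\in D_X(P_{\mathcal C}(\lambda))$, every coordinate $(i,j)$ lies in a non-$\Lambda_2$-free tile by hypothesis, so Lemma~\ref{same entries in the tiles}(iii) yields $y_{ij}=0$ for all $(i,j)$, giving $Y=0$ and hence $X$ a vertex of $P_{\mathcal C}(\lambda,\mu)$.

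The main obstacle I anticipate is precisely the bookkeeping around vertices $(i,j)\notin\mathfrak V(\mathcal C)$: one must make sure the "no free tiles" hypothesis really pins down all non-top-row coordinates, either by observing that singleton tiles off the top row are automatically free (so the hypothesis is vacuous unless $\mathfrak V(\mathcal C)$ is large), or by an explicit case distinction. Once that is handled, the rest is a direct application of Lemmas~\ref{somazeroger} and \ref{same entries in the tiles} plus the vertex criterion in Remark~\ref{Rem:dimFaceMin}.
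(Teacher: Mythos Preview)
Your approach is correct and coincides with the paper's: show $D_X(\mathcal{P})=\{0\}$ and invoke Lemma~\ref{same entries in the tiles}(ii) and (iii). The bookkeeping worry you raise is unnecessary: since $\mathcal{M}_{\mathcal{C}}(X)$ is a partition of \emph{all} of $\mathfrak{V}$ (Definition~\ref{mosaico}), the hypothesis ``no $\Lambda_1$-free tiles'' already says that the tile of \emph{every} $(i,j)$ meets row $n$, so every $(i,j)$ is not $\Lambda_1$-free and Lemma~\ref{same entries in the tiles}(ii) gives $y_{ij}=0$ for all coordinates at once---no case split on whether $(i,j)\in\mathfrak{V}(\mathcal{C})$ is needed.
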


%-----------------------------------------------------------%

\begin{proof}
	To show that $X$ is a vertex of a polyhedron $\mathcal{P}$ it is enough to proof that $D_{X}(\mathcal{P})=\{0\}$. With this in mind, (i) follows from Lemma \ref{same entries in the tiles}(ii), and (ii) follows from Lemma \ref{same entries in the tiles}(iii).
\end{proof}

%-----------------------------------------------------------%

\begin{definition}
\label{matmosa}
Given a $\mathcal{C}$-pattern $X$, with tiling $\mathcal{M}_{\mathcal{C}}(X)$ and set of $\Lambda_{1}$-free tiles $\mathcal{M}_{1},\mathcal{M}_{2},\ldots,\mathcal{M}_{s}$, we define a \emph{tiling matrix}  $A_{\mathcal{M}_{\mathcal{C}}(X)}$  to be the matrix 
 	$$
	\left(a_{ik}\right)_{\substack{
    1\leq i\leq n-1\\
    1\leq k\leq s}}\ \ \mbox{where }\ a_{ik}=\left|\left\{j\mid\left(i,j\right)\in \mathcal{M}_{k}\right\}\right|.
    $$
 For $\mathcal{C}$-patterns without  $\Lambda_1$-free tiles we consider $A_{\mathcal{M}_{\mathcal{C}}(X)}$ to be the identity matrix of order $n-1$. 
 
\end{definition}

%-----------------------------------------------------------%

\begin{remark}
\label{order tiles}
Note that the matrix $A_{\mathcal{M}_{\mathcal{C}}(X)}$ depends of the chosen order of the $\Lambda_1$-free tiles, but the dimension of the kernel of $A_{\mathcal{M}_{\mathcal{C}}(X)}$ does not depend. In fact, given two different orders of the $\Lambda_1$-free tiles, the corresponding matrices are related by a permutation of the columns. 
\end{remark}

%-----------------------------------------------------------%

\begin{example}
The  tiling matrix associated to the tableau in Figure $\ref{figctab}$, where the tiles are enumerated from left to right and from bottom to top, is: 
	\begin{align*}
	A_{\mathcal{M}_{\mathcal{C}}(X)} &
	=\left(\begin{array}{ccccccccc}
	1&0&0&0&0&0&0&0&0\\
	0&1&1&0&0&0&0&0&0\\
	0&1&0&1&1&0&0&0&0\\
	0&0&0&0&0&1&1&1&1
	\end{array}\right)
	\end{align*}
 
\end{example}

%-----------------------------------------------------------%

\section{Main results}
\label{sec:main}
	From now on and until the end of this paper we will fix a set of relations $\mathcal{C}$, a $\mathcal{C}$-pattern $X$, $\lambda=(x_{n1},\ldots,x_{nn})$, and $\mu=(w_{1}(X),w_{2}(X),\ldots,w_{n}(X))$. By $d$ we will denote the number of tiles, by $s$ the number of $\Lambda_1$-free tiles, and $r$ the dimension of  $\ker \left(A_{\mathcal{M}_{\mathcal{C}}(X)}\right)\subseteq\mathbb{R}^s$, so $r\leq s\leq d$.  Whenever $r\neq 0$, we fix a basis $\left\lbrace \widehat{\varepsilon}^{(1)},\widehat{\varepsilon}^{(2)},\ldots,\widehat{\varepsilon}^{(r)}\right\rbrace$ of $\ker \left(A_{\mathcal{M}_{\mathcal{C}}(X)}\right)\subseteq\mathbb{R}^s$. By Remark \ref{order tiles}, we will assume that $\mathcal{M}_{\mathcal{C}}(X)=\{\mathcal{M}_1,\ldots,\mathcal{M}_s,\ldots,\mathcal{M}_d\}$, where $\{\mathcal{M}_1,\ldots,\mathcal{M}_{s}\}$ is the set of all $\Lambda_1$-free tiles.
	
%-----------------------------------------------------------%

\begin{notation}
\label{nota:main}
Let us consider a basis $\left\lbrace \varepsilon^{(1)},\ldots,\varepsilon^{(d)}\right\rbrace$ of $\mathbb{R}^d$ with the following properties:  If $\ker \left(A_{\mathcal{M}_{\mathcal{C}}(X)}\right)=\{0\}$,  $\varepsilon^{(i)}$ will be the $i$th canonical vector of  $\mathbb{R}^d$. In the case $\ker \left(A_{\mathcal{M}_{\mathcal{C}}(X)}\right)\neq \{0\}$, for $i\geq r+1$,  $\varepsilon^{(i)}$ will be the $i$th canonical vector of  $\mathbb{R}^d$,  and  $\left\lbrace \varepsilon^{(1)},\varepsilon^{(2)},\ldots,\varepsilon^{(r)}\right\rbrace$ are defined by
	\begin{displaymath}
	\varepsilon_{k}^{(m)}= 
	\begin{cases}
		\widehat{\varepsilon}_{k}^{(m)}, & \text{if $k\leq s$;}\\
		0, & \text{if $k> s$,}
			\end{cases}
	\end{displaymath}
where $\textup{\textbf{x}}_{k}$ denotes the $k$th coordinate of a vector $\textup{\textbf{x}}$. Now, consider the linear map $\psi: \mathbb{R}^d  \longrightarrow  \mathbb{R}^{\frac{n(n+1)}{2}}$  defined by  $\psi(\varepsilon^{(m)})=Y^{(m)}$, where $y^{(m)}_{ij}=\varepsilon^{(m)}_{k}$  if $(i,j)\in \mathcal{M}_{k}$.

 	Finally, denote by $\mathcal{Y}_t$ the set $\left\lbrace Y^{(1)},\ldots,Y^{(t)}\right\rbrace$, for $t=1,2,\dots,d$. By convention, $\mathcal{Y}_0=\{0\}$. 

\end{notation}

%-----------------------------------------------------------%

\begin{lemma}
\label{lem:Dx(P)}
Under the previous notation. 
	\begin{itemize}
	\item[(i)] If $X\in\mathcal{P}_{\mathcal{C}}$, then $\mathcal{Y}_d\subseteq D_X\left(\mathcal{P}_{\mathcal{C}}\right)$.
	\item[(ii)] If $X\in\mathcal{P}_{\mathcal{C}}(\lambda)$, then $\mathcal{Y}_s\subseteq D_X\left(\mathcal{P}_{\mathcal{C}}\left(\lambda\right)\right)$.
	\item[(iii)] If $X\in\mathcal{P}_{\mathcal{C}}(\lambda,\mu)$, then $\mathcal{Y}_r\subseteq D_X\left(\mathcal{P}_{\mathcal{C}}\left(\lambda,\mu\right)\right)$.
\end{itemize}
\end{lemma}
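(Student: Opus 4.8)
The plan is to check each of the three inclusions directly from the constructions in Notation~\ref{nota:main}, the definition of $D_X$, and the two structural lemmas already at our disposal (Lemma~\ref{same entries in the tiles} and Lemma~\ref{somazeroger}). For (i), fix $m\in\{1,\dots,d\}$; I must show $X\pm Y^{(m)}\in\mathcal{P}_{\mathcal{C}}$, i.e.\ that $X\pm Y^{(m)}$ is a $\mathcal{C}$-pattern. Take a relation $((i,j);(r,s))\in\mathcal{C}$. If $(i,j)$ and $(r,s)$ lie in the same tile $\mathcal{M}_k$, then $x_{ij}=x_{rs}$ (they are connected by the arrow, which has equal endpoint entries in $X$ only when the edge is "used" — otherwise $x_{ij}>x_{rs}$ strictly) and $y^{(m)}_{ij}=\varepsilon^{(m)}_k=y^{(m)}_{rs}$, so the inequality $x_{ij}\pm y^{(m)}_{ij}\ge x_{rs}\pm y^{(m)}_{rs}$ holds with equality. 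If they lie in different tiles, then necessarily $x_{ij}>x_{rs}$ (strict), and since changing to a different point only perturbs entries by the fixed finite values $\varepsilon^{(m)}_k$, one... hmm, this last point needs care: a single perturbation $Y^{(m)}$ need not be "small".

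So I will instead argue as follows. The key observation is that $Y^{(m)}$ is \emph{constant on each tile}, hence $T(Y^{(m)})$ is itself a $\mathcal{C}$-pattern \emph{and} $-T(Y^{(m)})$ is a $\mathcal{C}$-pattern: indeed, for $((i,j);(r,s))\in\mathcal{C}$, if the two vertices are in the same tile then $y^{(m)}_{ij}=y^{(m)}_{rs}$ so $y^{(m)}_{ij}\ge y^{(m)}_{rs}$ and $-y^{(m)}_{ij}\ge -y^{(m)}_{rs}$; if they are in different tiles, then in $X$ the two entries differ, but actually being in different tiles does not by itself force $y^{(m)}_{ij}\ge y^{(m)}_{rs}$. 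Let me reconsider once more: the correct fact is that two vertices joined by an arrow of $G(\mathcal{C})$ are in the same tile \emph{precisely when} the corresponding entries of $X$ are equal (Definition~\ref{mosaico}), and if they are in different tiles then $x_{ij}>x_{rs}$ strictly (since $X$ is a $\mathcal{C}$-pattern, $x_{ij}\ge x_{rs}$, and equality would put them in the same tile). Therefore I will prove the sharper statement that \emph{for any sufficiently small $\epsilon>0$, $X\pm\epsilon\,T(Y^{(m)})$ is a $\mathcal{C}$-pattern}, and moreover that \emph{the scaling is irrelevant}: since $X$ is fixed, take $\epsilon$ small enough that $\epsilon|y^{(m)}_{ij}-y^{(m)}_{rs}|<x_{ij}-x_{rs}$ for every relation with distinct tiles (finitely many, all with strictly positive right-hand side), and equality on every relation within a tile; then $X\pm\epsilon T(Y^{(m)})\in\mathcal{P}_{\mathcal{C}}$. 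But wait — the lemma claims $Y^{(m)}\in D_X(\mathcal{P}_{\mathcal{C}})$, not $\epsilon Y^{(m)}$.

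Resolving this cleanly: the honest route is to observe that $D_X(\mathcal{P}_{\mathcal{C}})$ need \emph{not} contain $Y^{(m)}$ unless the perturbation stays feasible at scale $1$ — so either the lemma is using a convention where $\mathcal{P}_\mathcal{C}$-patterns allow the relevant entries to be perturbed freely (which is the case for $\mathcal{P}_{\mathcal{C}}$ since the non-tile relations are \emph{strict} at $X$ and... no, scale $1$ can still violate them). I think the intended and correct argument is the strict-inequality one combined with the fact that, in $\mathcal{P}_{\mathcal{C}}$, vertices in distinct tiles connected by a relation satisfy $x_{ij}>x_{rs}$, hence the claim as literally stated should be read with an implicit "after rescaling $Y^{(m)}$", OR — and this is what I will commit to — the tiles are defined by \emph{walks with equal entries}, so $Y^{(m)}$ constant on tiles guarantees that along every arrow of $\mathcal{C}$ \emph{whose endpoints are equal in $X$} the value of $Y^{(m)}$ is also preserved, while along every arrow with strict inequality in $X$ we have room; and the lemma's $\mathcal{Y}_d\subseteq D_X(\mathcal{P}_\mathcal{C})$ holds \emph{because we are free to replace the chosen basis $\{\varepsilon^{(m)}\}$ by any nonzero scalar multiple} — the downstream use only needs $\mathcal{Y}_t$ to be a linearly independent subset of $D_X$, and scaling preserves that. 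So: I will first record (as part of Notation or here) that the basis vectors $\varepsilon^{(m)}$ may be rescaled so small that $\psi(\varepsilon^{(m)})=Y^{(m)}$ satisfies $|y^{(m)}_{ij}-y^{(m)}_{rs}|<x_{ij}-x_{rs}$ for every relation $((i,j);(r,s))\in\mathcal{C}$ with $x_{ij}\ne x_{rs}$, uniformly in $m$; with this normalization each verification below is immediate.

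\medskip

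\noindent\textbf{Step 1 (part (i)).} Fix $m\le d$. For $((i,j);(r,s))\in\mathcal{C}$: if $(i,j),(r,s)\in\mathcal{M}_k$ then $x_{ij}=x_{rs}$ and $y^{(m)}_{ij}=y^{(m)}_{rs}$, so $(X\pm Y^{(m)})_{ij}=(X\pm Y^{(m)})_{rs}$; otherwise $x_{ij}>x_{rs}$ and by the normalization $|y^{(m)}_{ij}-y^{(m)}_{rs}|<x_{ij}-x_{rs}$, whence $(X\pm Y^{(m)})_{ij}>(X\pm Y^{(m)})_{rs}$. Thus $X\pm Y^{(m)}\in\mathcal{P}_{\mathcal{C}}$ for every $m$, i.e.\ $\mathcal{Y}_d\subseteq D_X(\mathcal{P}_{\mathcal{C}})$. (Recall $\mathcal{Y}_0=\{0\}\subseteq D_X(\mathcal{P}_\mathcal{C})$ trivially.)

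\medskip

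\noindent\textbf{Step 2 (part (ii)).} Assume $X\in\mathcal{P}_{\mathcal{C}}(\lambda)$ and fix $m\le s$, so $\mathcal{M}_m$ is a $\Lambda_1$-free tile. By Step 1, $X\pm Y^{(m)}\in\mathcal{P}_{\mathcal{C}}$; it remains to check $(X\pm Y^{(m)})_{nj}=\lambda_j$ for $1\le j\le n$, i.e.\ $y^{(m)}_{nj}=0$. But $(n,j)$ belongs to a tile $\mathcal{M}_k$ with $k>s$ (since $\mathcal{M}_k$ meets row $n$, it is not $\Lambda_1$-free, hence not among $\mathcal{M}_1,\dots,\mathcal{M}_s$), and by construction $\varepsilon^{(m)}_k=0$ for $k>s$ when $m\le r$, and for $r<m\le s$ the vector $\varepsilon^{(m)}$ is the $m$th canonical vector, so again $\varepsilon^{(m)}_k=0$ for $k\ne m$, in particular for all $k>s$. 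Hence $y^{(m)}_{nj}=\varepsilon^{(m)}_k=0$, so $X\pm Y^{(m)}\in\mathcal{P}_{\mathcal{C}}(\lambda)$ by Lemma~\ref{somazeroger}(i). Therefore $\mathcal{Y}_s\subseteq D_X(\mathcal{P}_{\mathcal{C}}(\lambda))$.

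\medskip

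\noindent\textbf{Step 3 (part (iii)).} Assume $X\in\mathcal{P}_{\mathcal{C}}(\lambda,\mu)$ and fix $m\le r$. By Step 2, $X\pm Y^{(m)}\in\mathcal{P}_{\mathcal{C}}(\lambda)$; by Lemma~\ref{somazeroger}(ii) it remains to verify $R_k(Y^{(m)})=0$ for all $1\le k\le n$. For $1\le k\le n-1$,
\[
R_k\bigl(Y^{(m)}\bigr)=\sum_{j=1}^{k}y^{(m)}_{kj}
=\sum_{t=1}^{d}\varepsilon^{(m)}_{t}\,\bigl|\{j\mid (k,j)\in\mathcal{M}_t\}\bigr|
=\sum_{t=1}^{s}\varepsilon^{(m)}_{t}\,a_{kt}
=\bigl(A_{\mathcal{M}_{\mathcal{C}}(X)}\,\varepsilon^{(m)}\bigr)_{k},
\]
using that $\varepsilon^{(m)}_t=0$ for $t>s$ and that $a_{kt}=|\{j\mid(k,j)\in\mathcal{M}_t\}|$ for $t\le s$. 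Since $\varepsilon^{(m)}$ (for $m\le r$) is, by Notation~\ref{nota:main}, the image of a kernel vector $\widehat\varepsilon^{(m)}\in\ker A_{\mathcal{M}_{\mathcal{C}}(X)}$ padded by zeros, we get $A_{\mathcal{M}_{\mathcal{C}}(X)}\varepsilon^{(m)}=0$, so $R_k(Y^{(m)})=0$ for $k\le n-1$. For $k=n$, $R_n(Y^{(m)})=\sum_{j=1}^n y^{(m)}_{nj}=0$ by Step 2. Hence $X\pm Y^{(m)}\in\mathcal{P}_{\mathcal{C}}(\lambda,\mu)$, giving $\mathcal{Y}_r\subseteq D_X(\mathcal{P}_{\mathcal{C}}(\lambda,\mu))$. $\square$

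\medskip

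\noindent\textbf{Where the difficulty lies.} The only genuinely delicate point is the one flagged in Step~1: a priori $Y^{(m)}$ at scale $1$ could overshoot a strict inequality $x_{ij}>x_{rs}$ and land outside $\mathcal{P}_\mathcal{C}$. The clean fix is to build the rescaling into the choice of basis in Notation~\ref{nota:main} (harmless, since every downstream argument needs only linear independence of $\mathcal{Y}_t$, which rescaling preserves, and since the finitely many strict relations each have a strictly positive "slack" $x_{ij}-x_{rs}>0$ to absorb a small perturbation). Everything else is a direct bookkeeping check against the definitions of tiling matrix, $\Lambda_1$-free tile, and the map $\psi$, together with Lemma~\ref{somazeroger}.
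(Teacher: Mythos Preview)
Your proof is correct and follows essentially the same route as the paper's. In particular, the paper handles the delicate point you flag in Step~1 in exactly the way you settle on: it silently inserts the assumption $|\varepsilon_k^{(m)}| < \tfrac{1}{2}\min\{|x_{rs}-x_{pq}|:x_{rs}\ne x_{pq}\}$ (a harmless rescaling of the basis) and then bounds $|y_{ij}^{(m)}-y_{rs}^{(m)}|$ by the slack $x_{ij}-x_{rs}$, while Steps~2 and~3 proceed via Lemma~\ref{somazeroger} and the tiling-matrix computation just as you do. One cosmetic remark: in Step~2 your clause ``so $\mathcal{M}_m$ is a $\Lambda_1$-free tile'' is true but unused (for $m\le r$ the vector $Y^{(m)}$ is not supported on a single tile); the argument you actually give---that $(n,j)$ lies in some $\mathcal{M}_k$ with $k>s$ and $\varepsilon^{(m)}_k=0$ there---is the right one and matches the paper.
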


%-----------------------------------------------------------%

\begin{proof}
To prove (i) we show that $X\pm Y^{(m)}\in \mathcal{P}_{\mathcal{C}}$ for $1\leq m\leq d$. As $X\pm Y\in P_{\emptyset}$ for any $Y\in \mathbb{R}^{\frac{n(n+1)}{2}}$, we can assume without lose of generality $\mathcal{C}\neq \emptyset $. Let us consider a relation $((i,j);(r,s))\in\mathcal{C}$. Suppose first that  $x_{ij}=x_{rs}$, in this case $(i,j)$ and $(r,s)$ belong to the same tile $\mathcal{M}_k$, and therefore  $y_{ij}^{(m)}=\varepsilon_k^{(m)}=y_{rs}^{(m)}$, which implies $\left(x_{ij}\pm y^{(m)}_{ij}\right)-\left(x_{rs}\pm y^{(m)}_{rs}\right)=0$. Suppose now that $x_{ij}> x_{rs}$ and assume   $$\left| \varepsilon_{k}^{(m)}\right| < \min\left\{ \frac{\left| x_{rs}-x_{pq}\right|}{2}\colon x_{rs}\neq x_{pq}\right\}$$ for $1\leq m,k\leq d$. Set $\epsilon^{(m)}:=\max\left\{\left|\varepsilon_k^{(m)}\right| \colon 1\leq k\leq d\right\}$. In this case
	\begin{align*}
	\pm\left(y_{ij}^{(m)}-y_{rs}^{(m)}\right)&\leq \left| y_{ij}^{(m)}-y_{rs}^{(m)}\right| \leq \left| y_{ij}^{(m)}\right| + \left| y_{rs}^{(m)}\right| \leq 2\epsilon^{(m)}\\ 	
	&< \min\left\{ \left| x_{rs}-x_{pq}\right|\colon x_{rs}\neq x_{pq}\right\}\\
	&\leq \left| x_{ij}-x_{rs} \right| = x_{ij}-x_{rs}.
	\end{align*} 
This implies $x_{ij}-x_{rs}\pm\left(y_{ij}^{(m)}-y_{rs}^{(m)}\right)\geq 0$.

	To prove (ii) we show that $X\pm Y^{(m)}\in P_{\mathcal{C}}(\lambda)$ for all $1\leq m\leq s$, by item (i) we have $Y^{(m)}\in D_X\left(\mathcal{P}_{\mathcal{C}}\right)$ for all $m=1,2,\dots,s$, and by Lemma \ref{somazeroger}(i), it is enough to prove that $y^{(m)}_{nj}=0$ for $1\leq j\leq n$, which follows from the fact that $\left(n,j\right)$ do not belong to any $\Lambda_1$-free tile. 

	Let us show that $X\pm Y^{(m)}\in P_{\mathcal{C}}(\lambda,\mu)$ for all $1\leq m\leq r$. For any $1\leq i\leq n-1$ we have
	$$
	R_i\left(Y^{(m)}\right)=\sum_{j=1}^{i}y_{ij}^{(m)}=\sum_{k=1}^{s}a_{ik}\varepsilon_{k}^{(m)},
	$$
where $a_{ik}=\left|\left\{j\mid\left(i,j\right)\in \mathcal{M}_{k}\right\}\right|$. The right hand side of the previous equality is the dot product between $\varepsilon^{(m)}\in \ker\left(A_{\mathcal{M}_{\mathcal{C}}(X)}\right)$ and the $i$th row of $A_{\mathcal{M}_{\mathcal{C}}(X)}$, therefore equal to zero. It follows from item (ii) and  Lemma \ref{somazeroger}(ii) that $X\pm Y^{(m)} \in P_{\mathcal{C}}\left(\lambda,\mu\right)$, which completes the proof of (iii).
\end{proof}

%-----------------------------------------------------------%

\begin{definition}
Let $\mathcal{C}$ be a set of relations, and $\mathcal{P}\subseteq \mathcal{P}_{\mathcal{C}}$ be any polyhedron. Associated with $\mathcal{P}$ and $\mathcal{C}$  we will consider the following polyhedron: 
	$$
	\mathcal{P}^{+}:=\{X\in \mathcal{P}\mid x_{ij}\geq 0 \text{ whenever } (i,j)\in\mathfrak{V}(\mathcal{C})\}.
	$$
$\mathcal{C}$ will be called \emph{top-connected}, if for each $(i,j)\in\mathfrak{V}(\mathcal{C})$ with $i\neq n$, there exists $r$  such that $(i,j)\succeq_{\mathcal{C}}  (n,r)$. 
\end{definition}

%-----------------------------------------------------------%

\begin{remark}\label{++}
	In the classical definition of Gelfand--Tsetlin pattern (cf. \cite{BZ89, KB95, LM04, ABS11, LMD19})  it is also required for the entries of the tableau to be non-negative. More concretely, the term Gelfand--Tsetlin pattern is  used  for elements in $\mathcal{P}_{\mathcal{C}_1}^{+}$. In particular, Gelfand-Tsetlin polytopes are defined as $\mathcal{P}_{\mathcal{C}_1}^{+}(\lambda,\mu)$. It became clear the necessity of discussing which properties of a polyhedra $\mathcal{P}$ are also satisfied  by $\mathcal{P}^{+}$.
	\end{remark}
	
%-----------------------------------------------------------%

If $\lambda$ is an integral dominant weight with non-negative entries we have $\mathcal{P}_{\mathcal{C}_1}(\lambda,\mu)=\mathcal{P}^{+}_{\mathcal{C}_1}(\lambda,\mu)$. Note also that $\mathcal{C}_{k}$ and $\mathcal{C}^{-}_{k}$ from Example \ref{relac} are top-connected for any $1\leq k\leq n$, however $\mathcal{C}^{+}_{k}$ is top-connected only if $k=n$.

%-----------------------------------------------------------%

\begin{lemma}\label{positive}
Let $\mathcal{C}$ be a top-connected set of relations. 
	\begin{itemize}
	\item[(i)]If $X\in \mathcal{P}^{+}_{\mathcal{C}}(\lambda)$, then $\mathcal{Y}_s\subseteq D_X\left(\mathcal{P}^{+}_{\mathcal{C}}(\lambda)\right)$.
	\item[(ii)] If $X\in \mathcal{P}^{+}_{\mathcal{C}}(\lambda,\mu)$, then $\mathcal{Y}_r\subseteq D_X\left(\mathcal{P}^{+}_{\mathcal{C}}(\lambda,\mu)\right)$. 
	\end{itemize}
\end{lemma}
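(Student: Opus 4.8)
The plan is to bootstrap from Lemma~\ref{lem:Dx(P)} and then verify only the extra sign inequalities that cut out $\mathcal{P}^{+}$ inside $\mathcal{P}$. Concretely, Lemma~\ref{lem:Dx(P)}(ii) already gives $X\pm Y^{(m)}\in\mathcal{P}_{\mathcal{C}}(\lambda)$ for $1\le m\le s$, and Lemma~\ref{lem:Dx(P)}(iii) gives $X\pm Y^{(m)}\in\mathcal{P}_{\mathcal{C}}(\lambda,\mu)$ for $1\le m\le r$. Since $\mathcal{P}^{+}_{\mathcal{C}}(\lambda)$ (resp.\ $\mathcal{P}^{+}_{\mathcal{C}}(\lambda,\mu)$) is obtained from $\mathcal{P}_{\mathcal{C}}(\lambda)$ (resp.\ $\mathcal{P}_{\mathcal{C}}(\lambda,\mu)$) precisely by imposing $x_{ij}\ge 0$ for $(i,j)\in\mathfrak{V}(\mathcal{C})$, it suffices for (i) to check $(X\pm Y^{(m)})_{ij}\ge 0$ for every $(i,j)\in\mathfrak{V}(\mathcal{C})$ and $1\le m\le s$; then (ii) will follow at once, because $r\le s$ forces $\mathcal{Y}_r\subseteq\mathcal{Y}_s$ and $X\in\mathcal{P}^{+}_{\mathcal{C}}(\lambda,\mu)\subseteq\mathcal{P}^{+}_{\mathcal{C}}(\lambda)$.

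To carry out the verification for (i), I would fix $1\le m\le s$ and recall that $X\pm Y^{(m)}$ is a $\mathcal{C}$-pattern, being an element of $\mathcal{P}_{\mathcal{C}}(\lambda)\subseteq\mathcal{P}_{\mathcal{C}}$. The first observation is that no vertex $(n,t)$ of the $n$th row lies in a $\Lambda_1$-free tile, since such a tile must avoid $\{(n,1),\dots,(n,n)\}$ by definition; hence the tile of $(n,t)$ is some $\mathcal{M}_k$ with $k>s$, and by Notation~\ref{nota:main} one has $\varepsilon^{(m)}_k=0$ for $k>s$, so $y^{(m)}_{nt}=0$ and $(X\pm Y^{(m)})_{nt}=x_{nt}\ge 0$ using $X\in\mathcal{P}^{+}_{\mathcal{C}}(\lambda)$. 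Now let $(i,j)\in\mathfrak{V}(\mathcal{C})$. If $i=n$, the claim is the previous line. If $i\neq n$, top-connectedness provides $t$ with $(i,j)\succeq_{\mathcal{C}}(n,t)$, i.e.\ a directed path $(i,j)=(i_0,j_0)\to\cdots\to(i_\ell,j_\ell)=(n,t)$ in $G(\mathcal{C})$ each of whose arrows corresponds to a relation in $\mathcal{C}$; chaining the $\mathcal{C}$-pattern inequalities for $X\pm Y^{(m)}$ along this path gives $(X\pm Y^{(m)})_{ij}\ge(X\pm Y^{(m)})_{nt}=x_{nt}\ge 0$. This shows $X\pm Y^{(m)}\in\mathcal{P}^{+}_{\mathcal{C}}(\lambda)$, hence $\mathcal{Y}_s\subseteq D_X(\mathcal{P}^{+}_{\mathcal{C}}(\lambda))$; combining the inequalities just established with Lemma~\ref{lem:Dx(P)}(iii) for $1\le m\le r$ then yields (ii).

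I expect the one step with genuine content — and the only place the hypothesis is used — to be the propagation of non-negativity from the top row to the interior: perturbing $X$ within $\mathcal{P}_{\mathcal{C}}(\lambda)$ does nothing by itself to keep an interior entry of $X\pm Y^{(m)}$ non-negative, so one really does need a directed path down to a row-$n$ vertex, which is exactly what top-connectedness guarantees, the row-$n$ entries themselves being unperturbed. Everything else is bookkeeping with the $\Lambda_1$-free tiles together with an appeal to Lemma~\ref{lem:Dx(P)}.
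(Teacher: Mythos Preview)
Your argument is correct, and it takes a genuinely different route from the paper's. The paper proves (i) by splitting on whether $(i,j)$ lies in a $\Lambda_1$-free tile: if not, Lemma~\ref{same entries in the tiles}(ii) gives $y^{(m)}_{ij}=0$ directly; if so, top-connectedness produces some $(n,r)$ with $x_{ij}>x_{nr}\ge 0$ (the strict inequality coming from the fact that $(i,j)$ and $(n,r)$ lie in different tiles), and then the authors \emph{rescale} $Y^{(m)}$ so that $|y^{(m)}_{ij}|<x_{ij}-x_{nr}\le x_{ij}$, forcing $x_{ij}\pm y^{(m)}_{ij}\ge 0$. You instead observe that $X\pm Y^{(m)}$ is already a $\mathcal{C}$-pattern by Lemma~\ref{lem:Dx(P)}(ii), and simply chain the $\mathcal{C}$-pattern inequalities along the directed path supplied by top-connectedness down to an unperturbed, non-negative top-row entry. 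This avoids both the case analysis on tile-freeness and the rescaling step; what you gain is a cleaner one-line argument, while the paper's approach yields in passing an explicit quantitative bound on the allowable perturbation size. Your derivation of (ii) from (i) via $\mathcal{Y}_r\subseteq\mathcal{Y}_s$ and Lemma~\ref{lem:Dx(P)}(iii) is also slightly different from the paper, which instead says (ii) is analogous to (i) using Lemma~\ref{same entries in the tiles}(iii) in place of (ii); both reductions are valid.
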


%-----------------------------------------------------------%

\begin{proof}
	We only prove (i), the proof of item (ii) is analogue using Lemma \ref{same entries in the tiles}(iii). By Lemma \ref{lem:Dx(P)}(ii) we have $\mathcal{Y}_s\subseteq D_X\left(\mathcal{P}_{\mathcal{C}}(\lambda)\right)$, so it is enough to prove that $x_{ij}\pm y_{ij}^{(m)}\geq 0$ whenever  $(i,j)\in \mathfrak{V}(\mathcal{C})$ and $1\leq m\leq s$. If $(i,j)$ is not  $\Lambda_{1}$-free, then by Lemma \ref{same entries in the tiles}(ii) we have  $y_{ij}^{(m)}=0$ and therefore $x_{ij}\pm y_{ij}^{(m)}=x_{ij}\geq 0$. On the other hand, suppose $(i,j)$ belongs to some  $\Lambda_{1}$-free tile $\mathcal{M}_k$, that  implies $i\neq n$. As $\mathcal{C}$ is  top-connected, there exists $1\leq r\leq n$ such that  $(i,j)\succeq_{\mathcal{C}}  (n,r)$, so $(n,r)\in\mathfrak{V}(\mathcal{C})$, and  $x_{ij}\geq x_{nr}\geq 0$. Moreover $x_{ij}\neq x_{nr}$ because $(i,j)$ and $(n,r)$ do not belong to the same tile. Rescaling $Y^{(m)}$ if necessary, we can assume $$\left| \varepsilon_{k}^{(m)}\right| < \min\left\{ \frac{\left| x_{rs}-x_{pq}\right|}{2}\colon x_{rs}\neq x_{pq}\right\}$$ for $1\leq m,k\leq d$, so we have
	\begin{equation*}
	\begin{split}
	\pm y_{ij}^{(m)}&=\pm\varepsilon^{(m)}_{k}\leq \left|\varepsilon^{(m)}_{k}\right| < \frac{1}{2}\min\left\{\left| x_{rs}-x_{pq}\right|\colon x_{rs}\neq x_{pq}\right\} \\
	&\leq \left| x_{ij}-x_{nr}\right| = x_{ij}-x_{nr}\leq x_{ij}.
	\end{split}
	\end{equation*}
Hence $x_{ij}\pm y_{ij}^{(m)}\geq 0$ for all $(i,j)\in\mathfrak{V}(\mathcal{C})$. 
\end{proof}

%-----------------------------------------------------------%

\begin{remark}\label{contraexemplo PC}
Note that in general it is not true that $\mathcal{Y}_d\subseteq D_X\left(\mathcal{P}^{+}_{\mathcal{C}}\right)$. For instance, consider $X$ to be the tableau with all entries being $0$, and $\mathcal{C}$ any set of relations with $\mathfrak{V}(\mathcal{C})=\mathfrak{V}$. It is trivial to show that $D_X\left(\mathcal{P}^{+}_{\mathcal{C}}\right)=\{0\}$. 
\end{remark}

%-----------------------------------------------------------%

	The following theorem generalizes \cite[Theorem 1.5]{LM04}. 

%-----------------------------------------------------------%

\begin{theorem}
\label{thm:dimI}
	Let $\mathcal{C}$ be any set of relations, $X$ a $\mathcal{C}$-pattern, and $\mathcal{M}_{\mathcal{C}}(X)$ its  associated tiling. Set $\lambda=(x_{n1},\ldots,x_{nn})$, and $\mu=(w_{1}(X),w_{2}(X),\ldots,w_{n}(X))$. Then
	\begin{itemize}
	\item[(i)] The dimension of the minimal face of $P_{\mathcal{C}}$ containing $X$ is equal to the number of tiles in $\mathcal{M}_{\mathcal{C}}(X)$.
	\item[(ii)] The dimension of the minimal face of $P_{\mathcal{C}}(\lambda)$  containing $X$ is equal to the number of $\Lambda_1$-free tiles in $\mathcal{M}_{\mathcal{C}}(X)$.
		\item[(iii)]  The dimension of the minimal face of $P_{\mathcal{C}}(\lambda,\mu)$  containing $X$ is equal to the dimension of the kernel of $A_{\mathcal{M}_{\mathcal{C}}(X)}$.
	\end{itemize}
\end{theorem}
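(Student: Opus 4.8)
The plan is to prove all three parts simultaneously by establishing, in each case, that the set $\mathcal{Y}_t$ (for the appropriate $t \in \{d, s, r\}$) is a \emph{maximal} linearly independent subset of $D_X(\mathcal{P})$, and then invoke Remark~\ref{Rem:dimFaceMin}, which identifies the dimension of the minimal face containing $X$ with the cardinality of any such maximal set. Lemma~\ref{lem:Dx(P)} already gives the inclusions $\mathcal{Y}_d \subseteq D_X(\mathcal{P}_{\mathcal{C}})$, $\mathcal{Y}_s \subseteq D_X(\mathcal{P}_{\mathcal{C}}(\lambda))$, and $\mathcal{Y}_r \subseteq D_X(\mathcal{P}_{\mathcal{C}}(\lambda,\mu))$, so the "lower bound" direction is essentially done; what remains is linear independence of these sets and the reverse "upper bound" — that no vector of $D_X(\mathcal{P})$ lies outside their span.

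\emph{First} I would check linear independence of $\mathcal{Y}_t$. The map $\psi$ of Notation~\ref{nota:main} sends the basis $\{\varepsilon^{(1)},\dots,\varepsilon^{(d)}\}$ of $\mathbb{R}^d$ to the $Y^{(m)}$, and since distinct tiles partition $\mathfrak{V}$, $\psi$ is injective: a vector $Y^{(m)}$ records the value $\varepsilon^{(m)}_k$ on every cell of tile $\mathcal{M}_k$, so $\psi$ has a left inverse reading off one coordinate per tile. Hence $\{Y^{(1)},\dots,Y^{(d)}\}$ is linearly independent, and so are its subsets $\mathcal{Y}_s$, $\mathcal{Y}_r$; for the $\mu$-fixed case one also uses that $\{\widehat{\varepsilon}^{(1)},\dots,\widehat{\varepsilon}^{(r)}\}$ was chosen to be a basis of $\ker(A_{\mathcal{M}_{\mathcal{C}}(X)})$.

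\emph{Next}, and this is where the real content lies, I would prove the upper bounds. Let $Y \in D_X(\mathcal{P}_{\mathcal{C}})$. By Lemma~\ref{same entries in the tiles}(i), $Y$ is constant on each tile of $\mathcal{M}_{\mathcal{C}}(X)$; say $Y$ takes value $c_k$ on $\mathcal{M}_k$. Then $Y = \psi(\sum_k c_k \varepsilon^{(k)})$ lies in $\mathrm{span}(\mathcal{Y}_d)$, so $D_X(\mathcal{P}_{\mathcal{C}}) \subseteq \mathrm{span}(\mathcal{Y}_d)$ and equality of dimensions follows, proving (i). For (ii), if $Y \in D_X(\mathcal{P}_{\mathcal{C}}(\lambda))$, then Lemma~\ref{same entries in the tiles}(ii) forces $c_k = 0$ on every tile that is not $\Lambda_1$-free, i.e. $Y = \psi(\sum_{k=1}^s c_k\varepsilon^{(k)}) \in \mathrm{span}(\mathcal{Y}_s)$; combined with $\mathcal{Y}_s \subseteq D_X(\mathcal{P}_{\mathcal{C}}(\lambda))$ this gives $\dim$ of the minimal face $= s$, the number of $\Lambda_1$-free tiles. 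For (iii), if $Y \in D_X(\mathcal{P}_{\mathcal{C}}(\lambda,\mu))$, then as in (ii) $Y$ is supported on the $\Lambda_1$-free tiles with constants $(c_1,\dots,c_s)$, and Lemma~\ref{somazeroger}(ii) gives $R_i(Y) = \sum_{k=1}^s a_{ik} c_k = 0$ for all $1 \le i \le n-1$, i.e. $(c_1,\dots,c_s) \in \ker(A_{\mathcal{M}_{\mathcal{C}}(X)})$. Thus $D_X(\mathcal{P}_{\mathcal{C}}(\lambda,\mu))$ is isomorphic via $\psi$ to $\ker(A_{\mathcal{M}_{\mathcal{C}}(X)})$, so its dimension is $r = \dim\ker(A_{\mathcal{M}_{\mathcal{C}}(X)})$, and by Lemma~\ref{lem:Dx(P)}(iii) the set $\mathcal{Y}_r$ realizes this bound.

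\emph{The main obstacle} I anticipate is bookkeeping rather than conceptual: making sure the correspondence $Y \leftrightarrow (c_1,\dots,c_d)$ via $\psi$ is handled cleanly (recalling Remark~\ref{rem:XvsT(X)}, one must work with the vectors $Y$ themselves, not the tableaux $T(Y)$), and verifying that in case (iii) the row index $i$ ranges only over $1,\dots,n-1$ — which is exactly why $A_{\mathcal{M}_{\mathcal{C}}(X)}$ has $n-1$ rows and why vertices of the $n$th row, never being $\Lambda_1$-free, are correctly excluded. One should also note that the degenerate conventions match up: when there are no $\Lambda_1$-free tiles, $A_{\mathcal{M}_{\mathcal{C}}(X)}$ is the identity of order $n-1$ with trivial kernel, and Lemma~\ref{No free tiles} already says $X$ is then a vertex of both $P_{\mathcal{C}}(\lambda)$ and $P_{\mathcal{C}}(\lambda,\mu)$, consistent with $s = 0$ and $r = 0$ respectively.
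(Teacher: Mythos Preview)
Your proposal is correct and follows essentially the same route as the paper: both arguments use Lemma~\ref{lem:Dx(P)} for the lower bound, verify linear independence of the $Y^{(m)}$ via injectivity of $\psi$, and obtain the upper bound by invoking Lemma~\ref{same entries in the tiles} to see that any $Y\in D_X(\mathcal{P})$ is constant on tiles (with the appropriate vanishing on non--$\Lambda_1$-free tiles, respectively the kernel condition, in cases (ii) and (iii)). The only cosmetic difference is that the paper phrases the conclusion through the affine hull $X+H$ of the minimal face together with Lemma~\ref{Lemma elements in the affine}, whereas you go directly through Remark~\ref{Rem:dimFaceMin}; one small notational caution is that in your formula $Y=\psi(\sum_k c_k\varepsilon^{(k)})$ you really mean the standard coordinate vector $(c_1,\dots,c_d)$ rather than a combination in the basis $\{\varepsilon^{(k)}\}$ of Notation~\ref{nota:main}, but the conclusion $Y\in\operatorname{span}\mathcal{Y}_t$ is unaffected.
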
 

%-----------------------------------------------------------%

\begin{proof}
	Let $H_{\lambda\mu},H_{\lambda}$ and $H$ be the $\mathbb{R}$-vector subspaces of  $\mathbb{R}^{\frac{n(n+1)}{2}}$ such that $H_{\lambda\mu}+X$, $H_{\lambda}+X$, and $H+X$ are the affine span of the minimal face of $P_{\mathcal{C}}(\lambda,\mu)$,  $P_{\mathcal{C}}(\lambda)$ and $P_{\mathcal{C}}$ containing $X$, respectively.  Let us consider the bases and linear maps as mentioned in Notation \ref{nota:main}. 

	Since $\varepsilon^{(1)},\ldots,\varepsilon^{(d)}$ are linearly independent, we conclude that $Y^{(1)},\ldots,Y^{(d)}$  are linearly  independent. Due to Lemmas \ref{Lemma elements in the affine} and \ref{lem:Dx(P)} we also have $\mathcal{Y}_r\subseteq H_{\lambda\mu}$, $\mathcal{Y}_s\subseteq H_{\lambda}$ and $\mathcal{Y}_d\subseteq H$. We finish the proof showing that $H_{\lambda\mu}$ is spanned by $\mathcal{Y}_r$, $H_{\lambda}$ is spanned by $\mathcal{Y}_s$, and $H$ is spanned by $\mathcal{Y}_d$ . 

	Set $Y\in H$ such that $X\pm Y\in P_{\mathcal{C}}$, and consider $\varepsilon:=(\varepsilon_1,\ldots,\varepsilon_d)\in\mathbb{R}^d$, where $\varepsilon_{k}:=y_{ij}$ whenever $\left(i,j\right)\in\mathcal{M}_{k}$ (note that $\varepsilon_{k}$ is well-defined by Lemma~\ref{same entries in the tiles} (i)). By construction of $\psi$, we have $\psi\left(\varepsilon\right)=Y$, and therefore $Y\in\textup{span}\ \mathcal{Y}_d$, because $\left\{\varepsilon^{(1)},\ldots,\varepsilon^{(d)}\right\}$ is a basis of $\mathbb{R}^d$.
	
 If $\mathcal{M}_{\mathcal{C}}(X)$ does not have $\Lambda_1$-free tiles, items (ii) and (iii) are consequence of Lemma \ref{No free tiles}.  Suppose that $\mathcal{M}_{\mathcal{C}}(X)$ has at least one $\Lambda_1$-free tile.

If $Y\in H_{\lambda}$ is such that $X\pm Y\in P_{\mathcal{C}}\left(\lambda\right)$, Lemma~\ref{same entries in the tiles} (ii) implies that $y_{ij}=0$ if $\left(i,j\right)$ is not $\Lambda_1$-free. As  $H_{\lambda}\subseteq H$ and $P_{\mathcal{C}}\left(\lambda\right)\subseteq P_{\mathcal{C}}$ we have $\varepsilon=(\varepsilon_1,\ldots,\varepsilon_s,0,0,\dots,0)$. Finally, as $\psi\left(\varepsilon\right)=Y$, we have $Y\in\textup{span}\ \mathcal{Y}_s$, by the construction of $\psi$ and the basis $\left\{\varepsilon^{(1)},\ldots,\varepsilon^{(s)}\right\}$ of $\mathbb{R}^s$.

 Consider now  $Y\in H_{\lambda\mu}$ such that $X\pm Y\in P_{\mathcal{C}}\left(\lambda,\mu\right)$. Lemma~\ref{same entries in the tiles} (iii) implies that $y_{ij}=0$ if $\left(i,j\right)$ is not $\Lambda_2$-free,  hence $\varepsilon_i=0$ for $i\in\{1,s+1,s+2,\ldots,d\}$. Consider $\widehat{\varepsilon}:=(\varepsilon_1,\ldots,\varepsilon_s)$. By the conditions on $Y$, we  get $\widehat{\varepsilon}\in \ker\left( A_{\mathcal{M}_{\mathcal{C}}(X)}\right)$, in fact, from Lemma \ref{somazeroger} (ii) we have $\sum\limits_{k=1}^{s}a_{ik}\varepsilon_{k}=\sum\limits_{j=1}^{i}y_{ij}=0$, where $a_{ik}=\left|\left\{j\mid\left(i,j\right)\in \mathcal{M}_{k}\right\}\right|$. Similarly, since $\psi\left(\varepsilon\right)=Y$ and $\left\{\widehat{\varepsilon}^{(1)},\ldots,\widehat{\varepsilon}^{(r)}\right\}$ is a basis of $\ker\left( A_{\mathcal{M}_{\mathcal{C}}(X)}\right)$. We conclude that $Y\in\textup{span}\ \mathcal{Y}_r$, by the construction of $\psi$ and the basis $\left\{\varepsilon^{(1)},\ldots,\varepsilon^{(d)}\right\}$ of $\mathbb{R}^d$.
\end{proof}

%-----------------------------------------------------------%

\begin{corollary}
Let $\mathcal{C}$ be a top-connected set of relations, $X$ a $\mathcal{C}$-pattern, and $\mathcal{M}_{\mathcal{C}}(X)$ its  associated tiling. Set $\lambda=(x_{n1},\ldots,x_{nn})$, and $\mu=(w_{1}(X),\ldots,w_{n}(X))$. Suppose that $\mathcal{M}_{\mathcal{C}}(X)$ has at least one $\Lambda_1$-free tile. Then
	\begin{itemize}
	\item[(i)] The dimension of the minimal face of $P^{+}_{\mathcal{C}}(\lambda)$ containing $X$ is equal to the number of $\Lambda_1$-free tiles in $\mathcal{M}_{\mathcal{C}}(X)$.
	\item[(ii)]  The dimension of the minimal face of $P^{+}_{\mathcal{C}}(\lambda,\mu)$  containing $X$ is equal to the dimension of the kernel of $A_{\mathcal{M}_{\mathcal{C}}(X)}$.
\end{itemize}

\end{corollary}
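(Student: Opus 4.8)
The plan is to mirror the proof of Theorem~\ref{thm:dimI}, replacing the polyhedra $P_{\mathcal{C}}(\lambda)$ and $P_{\mathcal{C}}(\lambda,\mu)$ by their nonnegative versions $P^{+}_{\mathcal{C}}(\lambda)$ and $P^{+}_{\mathcal{C}}(\lambda,\mu)$, and using Lemma~\ref{positive} in place of Lemma~\ref{lem:Dx(P)}(ii),(iii). As in the proof of Theorem~\ref{thm:dimI}, let $H^{+}_{\lambda}$ and $H^{+}_{\lambda\mu}$ be the $\mathbb{R}$-subspaces of $\mathbb{R}^{\frac{n(n+1)}{2}}$ with $X+H^{+}_{\lambda}$ and $X+H^{+}_{\lambda\mu}$ the affine spans of the minimal faces of $P^{+}_{\mathcal{C}}(\lambda)$ and $P^{+}_{\mathcal{C}}(\lambda,\mu)$ containing $X$, respectively. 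By Remark~\ref{Rem:dimFaceMin} it suffices to identify a maximal linearly independent subset of $D_X(P^{+}_{\mathcal{C}}(\lambda))$ and of $D_X(P^{+}_{\mathcal{C}}(\lambda,\mu))$; equivalently, since $D_X(\mathcal{P})$ spans the direction space of the minimal face, it suffices to show $H^{+}_{\lambda}=\operatorname{span}\mathcal{Y}_s$ and $H^{+}_{\lambda\mu}=\operatorname{span}\mathcal{Y}_r$.

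For the inclusions $\supseteq$: since $\mathcal{C}$ is top-connected, Lemma~\ref{positive}(i) gives $\mathcal{Y}_s\subseteq D_X(P^{+}_{\mathcal{C}}(\lambda))$ and Lemma~\ref{positive}(ii) gives $\mathcal{Y}_r\subseteq D_X(P^{+}_{\mathcal{C}}(\lambda,\mu))$; by Lemma~\ref{Lemma elements in the affine} these vectors lie in $H^{+}_{\lambda}$ and $H^{+}_{\lambda\mu}$ respectively. As noted in Notation~\ref{nota:main}, the $Y^{(m)}$ are linearly independent (they are images under the injective-on-$\operatorname{span}\{\varepsilon^{(1)},\dots,\varepsilon^{(d)}\}$ map $\psi$ of linearly independent vectors). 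Hence $\dim H^{+}_{\lambda}\geq s$ and $\dim H^{+}_{\lambda\mu}\geq r$. For the reverse inclusions: take $Y\in H^{+}_{\lambda}$ with $X\pm Y\in P^{+}_{\mathcal{C}}(\lambda)$. Since $P^{+}_{\mathcal{C}}(\lambda)\subseteq P_{\mathcal{C}}(\lambda)\subseteq P_{\mathcal{C}}$, Lemma~\ref{same entries in the tiles}(i) shows $y_{ij}$ is constant on each tile $\mathcal{M}_k$, so $Y=\psi(\varepsilon)$ for $\varepsilon=(\varepsilon_1,\dots,\varepsilon_d)$ with $\varepsilon_k$ the common value on $\mathcal{M}_k$; and Lemma~\ref{same entries in the tiles}(ii) forces $\varepsilon_k=0$ for each non-$\Lambda_1$-free tile, i.e. $\varepsilon=(\varepsilon_1,\dots,\varepsilon_s,0,\dots,0)$, whence $Y\in\operatorname{span}\mathcal{Y}_s$. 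Similarly, for $Y\in H^{+}_{\lambda\mu}$ with $X\pm Y\in P^{+}_{\mathcal{C}}(\lambda,\mu)$, Lemma~\ref{same entries in the tiles}(iii) gives $\varepsilon_i=0$ for $i\in\{1,s+1,\dots,d\}$ and Lemma~\ref{somazeroger}(ii) gives $\sum_{k=1}^{s}a_{ik}\varepsilon_k=\sum_{j=1}^{i}y_{ij}=0$ for $1\leq i\leq n-1$, so $\widehat{\varepsilon}:=(\varepsilon_1,\dots,\varepsilon_s)\in\ker(A_{\mathcal{M}_{\mathcal{C}}(X)})$, and since $\{\widehat{\varepsilon}^{(1)},\dots,\widehat{\varepsilon}^{(r)}\}$ is a basis of that kernel we get $Y\in\operatorname{span}\mathcal{Y}_r$. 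This yields (i) with $\dim H^{+}_{\lambda}=s$ and (ii) with $\dim H^{+}_{\lambda\mu}=r=\dim\ker(A_{\mathcal{M}_{\mathcal{C}}(X)})$, precisely the claimed values (the hypothesis that $\mathcal{M}_{\mathcal{C}}(X)$ has at least one $\Lambda_1$-free tile being what makes Lemma~\ref{positive} applicable, since $\mathcal{C}^{+}_k$-type behaviour at the boundary is handled there).

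The only genuinely new input beyond Theorem~\ref{thm:dimI} is the positivity constraint, and this is exactly what Lemma~\ref{positive} was proved to handle; so I expect no new obstacle in the $\supseteq$ direction. The step deserving the most care is the reverse inclusion: one must check that the argument bounding $\dim H^{+}_{\lambda}$ and $\dim H^{+}_{\lambda\mu}$ from above does not lose anything from the extra inequalities $x_{ij}\geq 0$ — but it does not, since those constraints only shrink the polyhedron, hence shrink $D_X$ and the minimal-face direction space, so the upper bounds from Lemma~\ref{same entries in the tiles} (proved for the larger polyhedra $P_{\mathcal{C}}(\lambda)$, $P_{\mathcal{C}}(\lambda,\mu)$) still apply verbatim. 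Thus the two bounds match and the corollary follows. I would remark explicitly that without top-connectedness the conclusion can fail, referring to Remark~\ref{contraexemplo PC} for the analogous phenomenon for $P^{+}_{\mathcal{C}}$.
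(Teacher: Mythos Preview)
Your argument is correct and follows the same logic the paper intends: the lower bound on the minimal-face dimension comes from Lemma~\ref{positive}, and the upper bound comes from the containment $P^{+}_{\mathcal{C}}(\lambda)\subseteq P_{\mathcal{C}}(\lambda)$ (and similarly for $(\lambda,\mu)$), which lets Lemma~\ref{same entries in the tiles} apply. The paper compresses this into a single line, ``Follows directly from Theorem~\ref{thm:dimI} and Lemma~\ref{positive},'' because the upper bound is already packaged in Theorem~\ref{thm:dimI}: since $D_X(P^{+}_{\mathcal{C}}(\lambda))\subseteq D_X(P_{\mathcal{C}}(\lambda))$, the dimension of the minimal face of $P^{+}_{\mathcal{C}}(\lambda)$ is at most that of $P_{\mathcal{C}}(\lambda)$, which equals $s$ by Theorem~\ref{thm:dimI}(ii); Lemma~\ref{positive}(i) then gives the matching lower bound. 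You have instead re-run the proof of Theorem~\ref{thm:dimI} for $P^{+}$, which is fine but longer than necessary. One small inaccuracy: the hypothesis that there is at least one $\Lambda_1$-free tile is not what makes Lemma~\ref{positive} applicable (that lemma has no such hypothesis); rather, it parallels the case split in the proof of Theorem~\ref{thm:dimI} and avoids the degenerate convention for $A_{\mathcal{M}_{\mathcal{C}}(X)}$.
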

\begin{proof}
Follows directly from Theorem \ref{thm:dimI} and Lemma \ref{positive}.
\end{proof}

%-----------------------------------------------------------%

\begin{remark}
In general it is not true that the dimension of the minimal face of $P^{+}_{\mathcal{C}}$ containing $X$ is equal to the number of tiles in $\mathcal{M}_{\mathcal{C}}(X)$. In fact, under the conditions of Remark \ref{contraexemplo PC}, $X$ is a vertex for $P^{+}_{\mathcal{C}}$, but $\mathcal{M}_{\mathcal{C}}(X)$ has one tile.
\end{remark}

%===========================================================%
					%ACKNOWLEDGEMENTS%
%===========================================================%

\medskip

\textbf{Acknowledgements}. 
G.B. thanks the Institute of Mathematics and Statistics of University of S\~ao Paulo (IME -- USP), where part of this work has been done. G.B. was partially supported by the Coordena\c{c}\~{a}o de Aperfei\c coamento de Pessoal de N\'ivel Superior -- Brasil (CAPES) --  Finance Code 001. L.E.R. was supported by FAPESP grant 2018/17955-7.

%===========================================================%
						%REFERENCES%
%===========================================================%

\end{document}